\providecommand{\keywords}[1]{\textbf{\textit{Keywords: }} #1}
\newtheorem{theorem}{Theorem}
\newtheorem{defi}[theorem]{Definition}
\newtheorem{lemma}{Lemma}
\newtheorem{corollary}{Corollary}
\begin{document}
\title{Reconstructing unrooted phylogenetic trees from symbolic ternary metrics}
%
%


\author{Stefan Gr\"{u}newald  \\
CAS-MPG Partner Institute for Computational Biology\\Chinese Academy of Sciences Key Laboratory of Computational Biology\\
320 Yue Yang Road, Shanghai 200032,  China\\        Email: \texttt{stefan@picb.ac.cn}
        \and
		  Yangjing Long \\	
		School of Mathematics and Statistics\\
		Central China Normal University, Luoyu Road 152, Wuhan, Hubei
430079, China\\
		  Email: \texttt{yjlong@sjtu.edu.cn}
		  \and     
		  Yaokun Wu \\
Department of Mathematics and MOE-LSC\\
Shanghai Jiao Tong University\\
        Dongchuan Road 800, Shanghai 200240, 
China\\
		  Email: \texttt{ykwu@sjtu.edu.cn}
}

\date{\ }

\maketitle

\abstract{
In 1998, B\"{o}cker and Dress presented a 1-to-1 correspondence 
between symbolically 
dated rooted trees and symbolic ultrametrics.
We consider the corresponding problem for unrooted trees. 
More precisely, given a tree $T$ with leaf set $X$ and a proper 
vertex coloring of its interior vertices, we can map every triple 
of three different leaves to the color of its median vertex. We 
characterize all ternary maps that can be obtained in this way in terms 
of 4- and 5-point conditions, and we show that the corresponding tree 
and its coloring can be reconstructed from a ternary map that 
satisfies those conditions. Further, we give an additional 
condition that characterizes whether the tree is binary, 
and we describe an algorithm that reconstructs general trees in
a  bottom-up fashion. }

\smallskip
\noindent


\keywords{symbolic ternary metric ; median vertex ; unrooted phylogenetic tree }

\sloppy

\section{Introduction}

A phylogenetic tree is a rooted or unrooted tree where the leaves are labeled 
by 
some objects of interest, usually taxonomic units (taxa) like species. The 
edges 
have a positive edge length, thus the tree defines a metric on the taxa set. It 
is a classical result in phylogenetics that the tree can be reconstructed from 
this metric, if it is unrooted or ultrametric. The latter means that the tree 
is 
rooted and all taxa are equally far away from the root. An ultrametric tree is 
realistic whenever the edge lengths are proportional to time and the taxa are species 
that can be observed in the present. In an ultrametric tree, the distance 
between two taxa is twice of the distance between each of the taxa and their last 
common ancestor (lca), hence pairs of taxa with the same lca must have the same 
distance. For three taxa $x,y,z$, it follows that there is no unique maximum 
within their three pairwise distances, thus we have $d(x,y) \le 
\mbox{max}\{d(x,z),d(y,z)\}$. This 3-point condition turns out to be sufficient 
for a metric to be ultrametric, too, and it is the key for reconstructing 
ultrametric trees from their distances. In  1995, Bandelt and 
Steel~\cite{Bandelt1995} observed that 
the complete ordering of the real numbers is not necessary to reconstruct 
trees, 
and they showed that the real-valued distances can be replaced by maps from the 
pairs of taxa into a cancellative abelian monoid. Later, B\" ocker and 
Dress~\cite{Bocker1998} 
pushed this idea to the limit by proving that the image set of the symmetric 
map does not need any structure at all (see Section~\ref{sect:result} for details). 
While this result is useful for understanding how little information it 
takes to reconstruct an ultrametric phylogenetic tree, it was not until recently that 
it turned out to have some practical applications. In 2013, Hellmuth et al. 
\cite{Hellmuth2013} found an alternative characterization of symbolic ultrametrics 
in terms of cographs and showed that, for perfect data, phylogenetic trees can be 
reconstructed from orthology information. By adding some optimization tools, 
this concept was then applied to analyze real data \cite{Hellmuth2015}. 


Motivated by the practical applicability of symbolic ultrametrics, we are 
considering their unrooted version. However, in an unrooted tree there is in 
general no interior vertex associated to a pair of taxa that would correspond 
to 
the last common ancestor in a rooted tree. Instead, there is a median 
associated 
to every set of three taxa that represents, for every possible rooting of the tree, 
a last common ancestor of at least two of the three taxa. 
Therefore, we consider ternary maps from the triples of taxa into an image set 
without any structure. We will show that an unrooted phylogenetic tree with a 
proper vertex coloring can be reconstructed from the function that maps every 
triple of taxa to the color of its median.

In order to apply our results to real data, we need some way to assign a state to
every set of three taxa, with the property that 3-sets with the same median will
usually have the same state. For symbolic ultrametrics, the first real application
was found 15 years after the development of the theory. In addition to the hope that
something similar happens with symbolic ternary metrics, we have some indication that
they can be useful to construct unrooted trees from orthology relations (see Section~\ref{sect:discuss}. 

Consider an unrooted tree $T$ with vertex set $V$, edge set $E$, and leaf 
set 
$X$, and a dating map $t:V\to M^\odot$, where $M^\odot=M\cup \{\odot\}$ such that
$t(x) = \odot$ for all $x\in X$, and $t(v_1)\neq t(v_2)$ if $v_1v_2\in E$.

For any $S=\{x,y\}\in {V \choose 2}$ there is a unique path $[x,y]$ with end 
points $x$ and $y$, and for any 3-set 
$S=\{x,y,z\}\in {V\choose 3}$ there is a unique \textit{triple point} or 
\textit{median} $\text{med}(x,y,z)$ such that 
$[x,y]\cap [y,z]\cap [x,z]=\{\text{med}(x,y,z)\}$. Putting $[x,x]=\{x\}$, 
the 
definition also works, if some or all of $x,y$
and $z$ equal.

Given a phylogenetic tree $T$ on $X$ and a dating map $t: V\to M^\odot $, we can 
define the symmetric symbolic ternary map
$d_{(T;t)}: X\times X\times X\to  M^\odot$
by
$d_{(T;t)}(x,y,z)=t(\text{med}(x,y,z))$.

In this set-up, our question can be phrased as follows: Suppose we are given an arbitrary
symbolic ternary map $\delta: X\times X\times X\to M^\odot$,
can we determine if there is a pair $(T;t)$ for which 
$d_{(T:t)}(x,y,z)=\delta(x,y,z)$ holds for all $x,y,z\in X$?

The rest of this paper is organized as follows.
In Section~\ref{sect:prelim}, we present the basic and relevant concepts used 
in this paper.
In Section~\ref{sect:result} we recall the one-to-one correspondence between 
symbolic ultrametrics and
symbolically dated trees, and introduce our main results 
Theorem~\ref{thm:ultra-ternary} and Theorem~\ref{thm:binary}.

In Section~\ref{sect:proof} we give the proof of Theorem~\ref{thm:ultra-ternary}.
In order to prove our main result, we first introduce the connection between 
phylogenetic trees and quartet systems on $X$ 
in Subsection~\ref{subsect:quartet}. Then we use a graph representation 
to analyze all cases of the map $\delta$ for 5-taxa subsets of $X$ in 
Subsection~\ref{subsect:graphical}.

In Section~\ref{sect:binary} we use a similar method to prove 
Theorem~\ref{thm:binary}, which gives
a sufficient and necessary condition to reconstruct a binary phylogenetic tree on 
$X$.

In Section~\ref{sect:pseudo-cherries}, we give a criterion to identify 
all pseudo-cherries of the underlying tree from a symbolic ternary metric.  
This result makes it possible to reconstruct the tree in 
a bottom-up fashion. 

In the last section we discuss some open questions and future work.

\subsection{Preliminaries}
\label{sect:prelim}

We introduce the relevant basic concepts and 
notation. Unless 
stated otherwise, we will follow the monographs ~\cite{Semple2003} and~\cite{Dress2012}.

In the remainder of this paper, $X$ denotes a finite set of size at least three.

An \textit{(unrooted) tree} $T=(V,E)$ is an undirected connected acyclic graph with 
vertex 
set $V$ and edge set $E$. A vertex of $T$ is a \textit{leaf}
if it is of degree 1, and all vertices with 
degree at least two are \textit{interior} vertices. 

A \textit{rooted tree} $T=(V,E)$ is a tree that contains a distinguished vertex 
$\rho_T \in V$ called the \textit{root}. We define a partial order $\preceq_T$
on $V$ by setting $v\preceq_T w$ for any two vertices $v,w\in V$ for which $v$
is a vertex on the path from $\rho_T$ to $w$. In particular, if $v\preceq_T w$
and $v\neq w$
we call $v$ an \textit{ancestor} of $w$.

An unrooted \textit{phylogenetic tree} $T$ on $X$ is an unrooted tree with 
leaf 
set $X$ that does not contain any vertex of degree 2. It is \textit{binary}, if 
every interior vertex has degree 3.

A rooted phylogenetic tree $T$ on $X$ is a rooted tree with leaf set $X$ that 
does not contain any vertices with in- and out-degree one,
and whose root $\rho_T$ has in-degree zero.
For a set $A\subseteq X$ with cardinality at least 2, we define \textit{the last common 
ancestor} of $A$, denoted by $\text{lca}_T(A)$, to be the unique vertex in $T$
that is the greatest lower bound of $A$ under the partial order $\preceq_T$.
In case $A=\{x,y\}$ we put $\text{lca}_T(x,y) = \text{lca}_T(\{x,y\})$.

Given a set $Q$ of four taxa $\{a,b,c,d\}$,  
there exist always exactly three partitions into two pairs: 
$\{\{a,b\},\{c,d\}\}$,$\{\{a,c\},\{b,d\}\}$ and $\{\{a,d\},\{b,c\}\}$. 
These partitions are called {\em quartets}, and they 
represent the three non-isomorphic unrooted binary trees with leaf set $Q$. These 
trees are usually called quartet trees, and they -- as well as the corresponding 
quartets
--are symbolized by
$ab|cd, ac|bd, ad|bc$ respectively. We use $Q(X)$ to denote the set of all quartets 
with four taxa in $X$. A phylogenetic tree $T$ on $X$ {\em displays} a quartet
$ab|cd \in Q(X)$, if the path from $a$ to $b$ in $T$ is vertex-disjoint with the path from 
$c$ to $d$. The collection of all quartets that are displayed by $T$ is denoted by $Q_T$.

Let $M$ be a non-empty finite set, $\odot$ denotes a special element not 
contained in $M$,
and $M^{\odot}:= M\cup \{\odot\}$. Note that in biology the symbol $\odot$ 
corresponds to
a "non-event" and is introduced for purely technical 
reasons~\cite{Hellmuth2013}. A \textit{symbolic ternary map} is a mapping from $X\times X\times X$ to $M^{\odot}$. Suppose we have a symbolic ternary map $\delta: X\times X\times X 
\to M^{\odot}$,
we say $\delta$ is \textit{symmetric}
if the value of
$\delta(x,y,z)$ is only related to the set $\{x,y,z\}$ but not on the 
ordering of $x,y,z$, i.e., if $\delta(x,y,z)=\delta(y,x,z)=\delta(z,y,x)=\delta(x,z,y)$ for all $x,y,z\in 
X$.
For simplicity, if a map $\delta: X\times X\times X \to M^{\odot}$ is 
symmetric, 
then 
we can define $\delta$ on the set $\{x,y,z\}$ to be $\delta(x,y,z)$.

Let $S$ be a set, we define $|S|$ to be the number of elements in $S$.

\section{Symbolic ultrametrics and our main results}\label{sect:result}
In this section, we first recall the main result  
concerning symbolic ultrametrics by B\"{o}cker and Dress~\cite{Bocker1998}.

Suppose $\delta: X\times X\to M^{\odot}$ is a map. We call 
$\delta$ a \textit{symbolic ultrametric} if it satisfies the following 
conditions:

(U1) $\delta(x,y)=\odot$ if and only if $x=y$;

(U2) $\delta(x,y)=\delta(y,x)$ for all $x,y\in X$, i.e., $\delta$ is symmetric;

(U3) $|\{\delta(x,y),\delta(x,z),\delta(y,z)\}|\leq 2$ for all $x,y,z\in X$; and

(U4) there exists no subset $\{x,y,u,v\} \in {X \choose 4}$ such that

$\delta(x,y) = \delta(y,u) = \delta(u,v) \neq \delta(y,v) = \delta(x,v) = 
\delta(x,u)$.

Now suppose that $T=(V,E)$ is a rooted phylogenetic tree on $X$ and that 
$t:V\to 
M^{\odot}$ is a map such that
$t(x)=\odot$ for all $x\in X$.  
We call such 
a map $t$ a \textit{symbolic dating 
map} for $T$; it is
\textit{discriminating} if $t(u)\neq t(v)$, for all edges $\{u,v\}\in E$. 
Given $(T,t)$, we associate the map $d_{(T;t)}$ on $X\times X$ by setting,
for all $x,y\in X$, $d_{(T;t)}(x,y)=t(\text{lca}_T(x,y))$. Clearly 
$\delta= d_{(T;t)}$ satisfies
Conditions (U1),(U2),(U3),(U4) and we say that $(T;t)$ is a {\em symbolic representation} 
of $\delta$. B\"{o}cker and Dress established in 1998 the following 
fundamental result which gives a
1-to-1 correspondence between symbolic ultrametrics and symbolic 
representations~\cite{Bocker1998},
i.e., the map defined by $(T,t) \mapsto d_{(T,t)}$ is a bijection from the set of
symbolically dated trees into the set of symbolic ternary metrics. 

\begin{theorem}[B\"{o}cker and Dress 1998~\cite{Bocker1998}]\label{thm:rooted}
 Suppose $\delta: X\times X\to M^{\odot}$ is a map. Then there is a 
discriminating symbolic representation of
 $\delta$ if and only if $\delta$ is a symbolic ultrametric. Furthermore, up to 
isomorphism, this representation is unique.
\end{theorem}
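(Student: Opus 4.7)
The plan is to prove both directions of the equivalence, with the forward direction being straightforward verification and the backward direction (together with uniqueness) proceeding by induction on $|X|$.

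For the forward direction, I would assume $(T,t)$ is a discriminating symbolic representation and check (U1)–(U4) in turn. Conditions (U1) and (U2) are immediate from $t(x)=\odot$ on leaves and the symmetry of $\text{lca}_T$. For (U3), the key observation is that among the three lcas $\text{lca}_T(x,y)$, $\text{lca}_T(y,z)$, $\text{lca}_T(x,z)$, exactly two must coincide with $\text{lca}_T(\{x,y,z\})$ and the third is a descendant, forcing at most two distinct $\delta$-values. For (U4), one assumes the forbidden equality pattern on $\{x,y,u,v\}$ and analyzes which of the six pairwise lcas can coincide in a rooted tree: whichever vertex is the top lca of the 4-set forces certain triples of lcas to agree, and I would check that no placement of $\text{lca}_T(\{x,y,u,v\})$ is consistent with both "Hamiltonian-path" color classes $\{xy,yu,uv\}$ and $\{xu,xv,yv\}$.

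For the backward direction I would argue by induction on $|X|$; the base case $|X|\le 2$ is trivial. For the inductive step, given a symbolic ultrametric $\delta$ on $X$, the heart of the proof is to \emph{identify the root color}. For each $m\in M$ that appears as a value of $\delta$, define the auxiliary graph $H_m=(X,\{\{x,y\}:\delta(x,y)\neq m\})$. The key lemma to establish is that there exists $m^*\in M$ for which $H_{m^*}$ is disconnected; once this is known, letting $X_1,\dots,X_k$ be its connected components, the 3-point condition (U3) forces $\delta(x,y)=m^*$ whenever $x,y$ lie in different $X_i$. Restricting $\delta$ to each $X_i$ yields a symbolic ultrametric on a smaller set, which by induction admits a unique discriminating representation $(T_i,t_i)$. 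Glue these by attaching the roots $\rho_{T_i}$ to a new root $\rho$ of color $m^*$; the construction is discriminating provided $t_i(\rho_{T_i})\neq m^*$ for each $i$, and this in turn is guaranteed precisely because $\rho_{T_i}$ sits at the top of a subtree built from the restricted ultrametric, whose own "root color" must be distinct from $m^*$ by the way components of $H_{m^*}$ were chosen.

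The main obstacle is the existence of the root color $m^*$, and this is where condition (U4) is indispensable. My plan here is to argue by contradiction: if $H_m$ is connected for every color $m$ appearing in $\delta$, then starting from any pair $x,y$ with $\delta(x,y)=m$, one can navigate through $H_m$ to build a $\delta$-bichromatic configuration on four points matching exactly the forbidden pattern of (U4). More precisely, pick any color $m$, pick an edge $\{x,y\}$ of $H_m$'s complement (i.e., $\delta(x,y)=m$), and use connectedness of $H_m$ together with (U3) to extend this to the full four-point obstruction. Uniqueness then follows by a parallel induction: the root color $m^*$ is forced, because any two components of $H_{m^*}$ must be separated by the root in any representation, so the top-level partition of $X$ is intrinsic to $\delta$; the subtrees are unique by the inductive hypothesis applied to each $X_i$.
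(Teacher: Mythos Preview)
The paper does not prove this theorem. Theorem~\ref{thm:rooted} is quoted from B\"ocker and Dress~\cite{Bocker1998} as background and is used later only as context; the paper's own proofs concern Theorems~\ref{thm:ultra-ternary} and~\ref{thm:binary}. So there is nothing in the paper to compare your argument against.

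That said, your outline is essentially the standard route to the B\"ocker--Dress result (and is close in spirit to the cograph/cotree reformulation later given in~\cite{Hellmuth2013}). The forward direction is fine as sketched. For the backward direction, the strategy of finding a ``root color'' $m^*$ for which $H_{m^*}$ is disconnected, then recursing on the components, is correct, but two points deserve care. First, the components of $H_{m^*}$ need not coincide with the children-subtrees of the root: leaves in the same child-subtree can still satisfy $\delta(x,y)=m^*$ if the color $m^*$ reappears deeper in the tree, so within a component the induced subgraph of $H_{m^*}$ need not be connected \emph{a priori}; what you actually need (and what suffices) is merely a nontrivial bipartition $X=A\cup B$ with $\delta(a,b)=m^*$ for all $a\in A$, $b\in B$. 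Second, the contradiction argument for the existence of $m^*$ is the crux and your sketch (``navigate through $H_m$ to build the forbidden 4-set'') is too vague as written: you should spell out how, assuming every $H_m$ is connected, you pick a shortest $H_m$-path between two $m$-colored vertices and use (U3) along that path to force the alternating pattern excluded by (U4). Once that lemma is nailed down, the induction and the uniqueness argument go through as you describe.
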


Similarly, we consider unrooted trees. Suppose that $T=(V,E)$ is an unrooted 
tree 
on $X$ and that $t:V\to M^{\odot}$ is a symbolic dating map, i.e.,
$t(x)=\odot$ for all $x\in X$, it is discriminating if $t(x)\neq t(y)$ for all $(x,y)\in E$.
Given the pair $(T;t)$, we associate the map 
$\delta_{(T;t)}$ on $X\times X\times X$ by setting,
for all $x,y,z\in X$, $\delta_{(T;t)}(x,y,z)=t(\text{med}(x,y,z))$.

Before stating our main results, we need the following definition:
\begin{defi}[$n$-$m$ partitioned]
 Suppose $\delta: X\times X\times X\to M^{\odot}$ is a symmetric map. We say 
that a subset $S$ of $X$ is \textit{$n$-$m$ partitioned (by $\delta$)}, if
 among all the 3-element subsets of $S$, there are in 
total 2 different values of $\delta$,  
and $n$ of those 3-sets are mapped to one value while all other $m$ 3-sets 
are mapped to the other value.
\end{defi}
Note that $S$ can be $n$-$m$ partitioned, only when $\binom{|S|} { 3} = m+n$. 
\begin{defi}[symbolic ternary metrics]
We say $\delta:X\times X\times X\to M^{\odot}$ is a \textit{symbolic 
ternary metric}, if the following conditions hold.

(1) $\delta$ is symmetric, i.e., 
$\delta(x,y,z)=\delta(y,x,z)=\delta(z,y,x)=\delta(x,z,y)$ for all $x,y,z\in X$.

(2) $\delta(x,y,z)=\odot$ if and only if $x=z$ or $y=z$ or $x=y$.

(3) for any distinct $x,y,z,u$ we have 
$$|\{\delta(x,y,z),\delta(x,y,u),\delta(x,z,u),\delta(y,z,u)\}|\leq 2,$$
and when the equality holds then $\{x,y,z,u\}$ is 2-2 partitioned by $\delta$.

(4) there is no distinct 5-element subset $\{x,y,z,u,e\}$ of $X$ which
is 5-5 partitioned by $\delta$.
 
\end{defi}

We will refer to these conditions throughout the paper. 

Our main result is:

\begin{theorem} \label{thm:ultra-ternary}

There is a 1-to-1 correspondence between the discriminating symbolically 
dated phylogenetic trees and 
the symbolic ternary metrics on $X$.
\end{theorem}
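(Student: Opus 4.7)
My plan is to reduce the unrooted statement to the rooted Theorem~\ref{thm:rooted} through a ``pick a reference leaf'' trick. For the forward direction, given a pair $(T;t)$, I would verify that $\delta:=\delta_{(T;t)}$ satisfies conditions~(1)--(4). Conditions~(1) and~(2) follow directly from the symmetry of the median and from $t(v)=\odot$ iff $v$ is a leaf. For~(3), any four distinct leaves span a subtree whose interior consists of at most two ``quartet vertices'' $u,v$; each of the four triple-medians is one of $u,v$, and either $u=v$ (so all four medians coincide) or $u\neq v$, in which case exactly two triples have median $u$ and two have median $v$ according to the displayed quartet. For~(4), I would enumerate the three shapes of the subtree spanned by five leaves (star; one cherry attached to a 3-leaf fan; caterpillar) and observe that the ten triple-medians distribute among the at most three interior Steiner vertices as $10$, $3+7$, or $3+4+3$; combined with the discriminating condition (adjacent Steiner vertices receive different colors), none of these can yield a $5$-$5$ color split.

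For the reverse direction, I would pick any $r\in X$ and define $\delta_r:(X\setminus\{r\})\times(X\setminus\{r\})\to M^\odot$ by $\delta_r(x,y):=\delta(r,x,y)$. The central claim is that $\delta_r$ is a symbolic ultrametric. Properties~(U1)--(U3) transfer immediately from~(1)--(3). The crux is~(U4): supposing $\delta_r$ violated~(U4) via $\{x,y,u,v\}$ with $\delta_r(x,y)=\delta_r(y,u)=\delta_r(u,v)=\alpha\neq\beta=\delta_r(y,v)=\delta_r(x,v)=\delta_r(x,u)$, I would apply condition~(3) to each of the four $4$-subsets of $\{r,x,y,u,v\}$ containing $r$: in each case the three triples through $r$ produce either three $\alpha$'s and one $\beta$ or one $\alpha$ and three $\beta$'s, forcing the remaining non-$r$ triple (by the $2$-$2$ clause of~(3)) to take the opposite value; the four forced values together with the six $r$-triples yield a $5$-$5$ partition of $\{r,x,y,u,v\}$, contradicting~(4). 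Having $\delta_r$ as a symbolic ultrametric, I would apply Theorem~\ref{thm:rooted} to obtain a unique discriminating rooted phylogenetic tree $(T',t')$ on $X\setminus\{r\}$ with root $\rho'$ representing $\delta_r$.

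To build the unrooted tree $T$ for $\delta$, I would attach $r$ to $T'$ via a new edge from $r$ to $\rho'$, first suppressing $\rho'$ if it had out-degree one in $T'$ (this costs nothing since such a root never serves as an lca of any leaf pair), and set $t(r)=\odot$ and $t(v)=t'(v)$ on the remaining internal vertices. Then $(T;t)$ is discriminating: the new edge meets the condition since $t(r)=\odot\neq t'(\rho')\in M$, and the others inherit it from $t'$. To verify $\delta_{(T;t)}=\delta$, I would split on whether $r$ appears among the three arguments. If $r$ does, say on $\{r,x,y\}$, then the median in $T$ coincides with $\mathrm{lca}_{T'}(x,y)$, so its color equals $\delta_r(x,y)=\delta(r,x,y)$. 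For a triple $\{x,y,z\}\subseteq X\setminus\{r\}$, the median in $T$ is the deepest of the three pairwise lca's in $T'$, say $w$, and two of these lca's coincide with $w$ by the ultrametric structure; applying condition~(3) to the $4$-set $\{r,x,y,z\}$ then forces $\delta(x,y,z)=t'(w)=t(\mathrm{med}_T(x,y,z))$. Uniqueness of $(T;t)$ follows from the uniqueness part of Theorem~\ref{thm:rooted} applied to $(T',t')$. The main obstacle I anticipate is the~(U4) verification for $\delta_r$, which requires careful book-keeping across four overlapping $4$-subsets; the degree-$2$ root issue is only a mild technicality that the suppression step resolves.
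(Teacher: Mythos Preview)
Your argument is correct and takes a genuinely different route from the paper's proof. The paper stays entirely in the unrooted world: from a symbolic ternary metric it \emph{generates} a quartet system, uses the graph-representation analysis of $K_5$ (Lemma~\ref{lem:graph}) together with Lemma~\ref{lem:quartet2} to verify that this system is thin, transitive and saturated, and then invokes Theorem~\ref{thm:quartet2} to obtain the tree. Your approach instead picks a reference leaf $r$, shows that $\delta_r(\cdot,\cdot):=\delta(r,\cdot,\cdot)$ is a symbolic ultrametric on $X\setminus\{r\}$, and appeals to Theorem~\ref{thm:rooted}; the key step---that a violation of~(U4) for $\delta_r$ forces a $5$-$5$ partition of $\{r,x,y,u,v\}$---is exactly the right bridge between Condition~(4) and~(U4). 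This is essentially the reduction used by Gurvich and by Huber et~al.\ that the paper acknowledges in its note added in proof. What your route buys is brevity and the reuse of the rooted machinery; what the paper's route buys is a self-contained unrooted argument and an explicit description of the displayed quartet system, which feeds directly into Theorem~\ref{thm:binary} and the algorithmic discussion in Section~\ref{sect:pseudo-cherries}.

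Two small remarks. First, in your (U4) step you write that ``the three triples through $r$ produce either three $\alpha$'s and one $\beta$ or one $\alpha$ and three $\beta$'s''; in fact the three $r$-triples in each $4$-set split $2$--$1$, and it is precisely this $2$--$1$ split that forces the remaining non-$r$ triple to the minority value via the $2$-$2$ clause of Condition~(3). The conclusion (a $5$-$5$ partition) is unaffected. Second, the suppression of a degree-one root is never needed: since $|X|\geq 3$, the rooted phylogenetic tree $T'$ on $X\setminus\{r\}$ has a root of out-degree at least~$2$, so attaching $r$ already yields an interior vertex of degree at least~$3$.
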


Let $\delta$ be a ternary symbolic ultrametric on $X$. Then we call 
$\delta$ {\em fully resolved}, if the following condition holds: 

(*) If $|\{\delta(x,y,z),\delta(x,y,u),\delta(x,z,u),\delta(y,z,u)\}|= 1$, then 
there exists $e\in X$ such that $e$ can resolve $xyzu$.
i.e., the set $\{x,y,z,u,e\}$ is 4-6 partitioned by $\delta$.

Now we can characterize ternary symbolic ultrametrics  that 
correspond to binary phylogenetic trees:  

\begin{theorem} \label{thm:binary}

There is a 1-to-1 correspondence between the discriminating symbolically 
dated binary phylogenetic trees and the 
fully resolved symbolic ternary metrics on $X$.

\end{theorem}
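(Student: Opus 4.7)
The plan is to invoke Theorem~\ref{thm:ultra-ternary}, which associates to every symbolic ternary metric $\delta$ a unique (up to isomorphism) discriminating symbolic representation $(T,t)$. Under this correspondence, Theorem~\ref{thm:binary} reduces to showing that $T$ is binary if and only if $\delta$ satisfies condition (*).

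For the binary-implies-fully-resolved direction, I would start with four leaves $x,y,z,u$ whose four medians all share a common color $\alpha$. In a binary tree the minimal subtree spanning $\{x,y,z,u\}$ has exactly two interior vertices $p,q$ corresponding to the unique displayed quartet, with $\text{med}(x,y,z) = \text{med}(x,y,u) = p$ and $\text{med}(x,z,u) = \text{med}(y,z,u) = q$. The hypothesis forces $t(p) = t(q) = \alpha$, so by discriminating $p$ and $q$ are not adjacent. Let $r$ be the neighbor of $p$ on the $p$-$q$ path; then $t(r) \neq \alpha$, and being a degree-$3$ interior vertex, $r$ has a third edge leading to a subtree which contains some leaf $e \in X \setminus \{x,y,z,u\}$. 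A short path-intersection analysis then yields $\text{med}(x,y,e) = p$, $\text{med}(z,u,e) = q$, and $\text{med}(x,z,e) = \text{med}(x,u,e) = \text{med}(y,z,e) = \text{med}(y,u,e) = r$. Combined with the four original medians, six triples of $\{x,y,z,u,e\}$ are colored $\alpha$ and four are colored $t(r)$, so the 5-set is 4-6 partitioned, witnessing (*).

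For the converse I would argue contrapositively: if $T$ is not binary then $\delta$ fails (*). Suppose some interior vertex $v$ has degree at least $4$, and pick leaves $x,y,z,u$ from four distinct subtrees hanging off $v$. All four medians on $\{x,y,z,u\}$ equal $v$, activating the hypothesis of (*). I then show that no $e \in X \setminus \{x,y,z,u\}$ can resolve $xyzu$. If $e$ lies in a fifth subtree of $v$, every triple in $\{x,y,z,u,e\}$ has median $v$, yielding only a single color. Otherwise $e$ lies inside one of the four chosen subtrees, say $T_x$; then seven triples (those not containing both $x$ and $e$) still have median $v$, while the three triples $\{x,y,e\}, \{x,z,e\}, \{x,u,e\}$ share one common median inside $T_x$ (namely the median in $T_x$ of $x$, $e$, and the neighbor of $v$ in $T_x$). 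Hence the 5-set has a 10-0 or 7-3 color split, never 4-6, so (*) fails.

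The main obstacle is the bookkeeping of the ten medians on $\{x,y,z,u,e\}$ in each case: verifying via path-intersection arguments that the listed multiplicities are correct, and extracting the crucial color inequality $t(r) \neq \alpha$ on the forward side via careful use of the discriminating property. Once these combinatorial identifications are in place, the 4-6 partition and its obstruction fall out immediately.
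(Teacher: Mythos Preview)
Your argument is correct. The forward direction is essentially the paper's: both locate a vertex of a different colour on the path between the two medians of $\{x,y,z,u\}$ (you take the specific neighbour $r$ of $p$, the paper takes any such vertex) and use a leaf in the subtree hanging off it as the resolver $e$; the resulting 4--6 count is the same.

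The reverse direction, however, is genuinely different. The paper does \emph{not} argue contrapositively on the tree. Instead it stays on the quartet side: Condition~(*) forces the quartet system generated by $\delta$ to be complete (Corollary~\ref{cor:complete}), and a complete, transitive, saturated quartet system corresponds to a binary phylogenetic tree (Corollary~\ref{cor:quartet}); the dating map is then read off as in the proof of Theorem~\ref{thm:ultra-ternary}. Your route bypasses the quartet machinery entirely: once Theorem~\ref{thm:ultra-ternary} hands you $(T,t)$, you show directly that a vertex of degree $\ge 4$ obstructs~(*) by the 10--0/7--3 case split. This is more elementary and self-contained, and it makes transparent \emph{why} high-degree vertices are the obstruction. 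The paper's route, on the other hand, reuses infrastructure already built and makes the link to completeness of the quartet system explicit, which is conceptually useful elsewhere in the paper. One small point worth stating in your write-up: in the ``$e\in T_x$'' case the common median of $\{x,y,e\},\{x,z,e\},\{x,u,e\}$ may or may not receive the colour $t(v)$, so the split is 10--0 \emph{or} 7--3; you note this, but make sure the final sentence covers both possibilities cleanly.
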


%
%
%
%
%
%
%
%

\section{Reconstructing a symbolically dated phylogenetic tree}
\label{sect:proof}

The aim of this section is to prove Theorem~\ref{thm:ultra-ternary}.

\subsection{Quartet systems}
\label{subsect:quartet}

We will use quartet systems to prove Theorem~\ref{thm:ultra-ternary}. 
In 1981, Colonius and Schulze~\cite{Colonius1981} found that, 
for a quartet system $Q$ on 
a finite taxa set X, there is a phylogenetic tree $T$ on $X$ such that 
$Q=Q_T$, if and only if certain conditions on subsets of $X$ with up to 
five elements hold. The following theorem (Theorem 3.7 in~\cite{Dress2012}) 
states their result.

%



A quartet system $Q$ is \textit{thin}, if for every 4-subset ${a, b, c, 
d}\subseteq X$,
at most one of the three quartets $ab|cd$, $ac|bd$ and $ad|bc$ is contained
in $Q$. It is \textit{transitive}, if for any 5 distinct elements $a, b , c , d , e \in X$, the 
quartet
$ab |cd$ is in $Q$ whenever both of the quartets $ab |ce$ and $a b |d e$ are 
contained in $Q$. It is \textit{saturated}, if for any
five distinct elements $a$ , $b$ , $c$ , $d$ , $e \in X$ with $a b 
|c d \in Q$, at least one of
the two quartets $a e|c d$ and $a b |c e$ is also in $Q$.

\begin{theorem}\label{thm:quartet2}
 A quartet system $Q \subseteq Q(X)$ is of the form $Q = Q(T)$ for some
phylogenetic tree $T$ on $X$ if and only if 
$Q$ is thin, transitive and saturated.

\end{theorem}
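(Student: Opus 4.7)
The plan is to handle the two implications separately. The necessity direction is the easy one: given a phylogenetic tree $T$, one checks the three properties by inspecting the (topological) minimal subtree spanned by any 4- or 5-element leaf subset. For thinness, observe that $ab|cd$ being displayed means the path from $a$ to $b$ and the path from $c$ to $d$ are vertex-disjoint in $T$, and a simple connectivity argument rules out two such disjointness patterns simultaneously. For transitivity and saturation, the induced subtree on any five leaves has only a handful of possible unlabeled topologies (caterpillar, or with an interior vertex of degree at least 4), and in each case one verifies the conditions directly.

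For the hard direction we argue by induction on $|X|$. The base cases $|X|\le 4$ are immediate. For the inductive step, pick any $x\in X$ and apply the induction hypothesis to the restricted quartet system $Q':=\{ab|cd\in Q: a,b,c,d\in X\setminus\{x\}\}$ (which is easily checked to be thin, transitive, and saturated on $X\setminus\{x\}$), obtaining a phylogenetic tree $T'$ on $X\setminus\{x\}$ with $Q_{T'}=Q'$. The task is then to identify the unique location in $T'$ — either a vertex to attach $x$ to by a new edge, or an edge of $T'$ to subdivide — so that the resulting tree $T$ on $X$ satisfies $Q_T=Q$.

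The location is read off from the quartets of $Q$ that involve $x$. For each edge $e$ of $T'$, removing $e$ induces a bipartition $A_e\mid B_e$ of $X\setminus\{x\}$; call $e$ \emph{$x$-separating from $B_e$} if there exist $a_1,a_2\in A_e$ and $b_1,b_2\in B_e$ with $xa_i\mid b_1b_2\in Q$ (the analogous condition witnesses the other orientation). Using transitivity, one shows that the set of edges along which $x$ is on a consistent side forms a connected subtree of $T'$ that is either a single vertex or a single edge (at which point $x$ is attached directly or via subdivision). Saturation is what guarantees that no edge is left ``undecided'': every interior edge of $T'$ must be oriented with respect to $x$ by some quartet of $Q$, because otherwise propagation from existing quartets via the saturation 5-condition would force a missing quartet to be present.

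The main obstacle will be the final verification step, namely that the tree $T$ constructed by the above attachment satisfies $Q_T=Q$. The inclusion $Q\subseteq Q_T$ for quartets not involving $x$ is inherited from the induction hypothesis, and for quartets of the form $xa\mid bc$ it follows by tracing how the attachment point sits relative to the paths in $T'$. The reverse inclusion $Q_T\subseteq Q$ is more delicate: one must use saturation repeatedly to pull quartets displayed by $T$ back into $Q$, performing a careful case analysis on 5-element subsets of the form $\{x,a,b,c,d\}$. This is precisely the setting for which the transitivity and saturation axioms were designed, so the argument ultimately reduces to a finite enumeration of 5-leaf subtree topologies, with each case closed by a single application of one of the two axioms. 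Uniqueness of $T$ then follows from a standard argument showing that any phylogenetic tree is determined by its quartet system.
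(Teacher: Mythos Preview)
The paper does not give its own proof of this theorem. It is quoted as a known result from the literature: the paragraph introducing it says ``The following theorem (Theorem~3.7 in~\cite{Dress2012}) states their result,'' referring to Colonius and Schulze~\cite{Colonius1981}. The theorem is then used as a black box in the proof of Theorem~\ref{thm:ultra-ternary}. So there is nothing in the paper to compare your attempt against.

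As for your sketch on its own merits: the overall architecture --- necessity by inspecting the induced subtree on four or five leaves, sufficiency by induction on $|X|$ via deleting a leaf $x$, reconstructing $T'$ on $X\setminus\{x\}$, and then locating the attachment point for $x$ from the quartets involving $x$ --- is indeed the standard route and is essentially how the proof is carried out in~\cite{Dress2012}. Two places in your outline would need real work to become a proof. First, your definition of ``$x$-separating from $B_e$'' and the claim that the resulting orientations single out ``a single vertex or a single edge'' is too coarse as stated; what one actually shows is that the edge-orientations induced by quartets of the form $xa\mid bc$ are globally consistent (this uses transitivity) and that no interior edge is left unoriented when it should not be (this uses saturation), and together these pin down a unique vertex or a unique edge of $T'$. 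Second, the reverse inclusion $Q_T\subseteq Q$ is, as you anticipate, the crux, and ``a finite enumeration of 5-leaf subtree topologies'' is accurate but hides a genuine case analysis that must be written out; in particular one has to handle the situation where the attachment creates a new interior vertex of degree~3 versus where $x$ is attached to an existing vertex. None of this is a fatal gap --- your plan is correct --- but be aware that the full argument in~\cite{Dress2012} spans several pages for good reason.
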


We can encode a phylogenetic tree on $X$ in terms of a quartet system by taking 
all the quartets displayed by the tree, as 
two phylogenetic 
trees on $X$ are isomorphic if and only if
the associated quartet systems coincide~\cite{Dress2012}.
Hence, a quartet system that satisfies Theorem~\ref{thm:quartet2} uniquely determines a phylogenetic
tree. 
%

%

%

\subsection{Graph representations of a ternary map}
\label{subsect:graphical}

Suppose we have a symmetric map $\delta: X\times X\times X \to M^{\odot}$.
Then we can represent the restriction of $\delta$ to all 3-element subsets 
of any 5-element subset $\{x,y,z,u,v\}$ of $X$ by an edge-colored complete 
graph on the 5 vertices 
$x,y,z,u,v$. For any distinct $a,b,c,d\in \{x,y,z,u,v\}$, edge $ab$ and edge 
$cd$ have the same color if and only if the value of $\delta$ for  
$\{x,y,z,u,v\}\setminus \{a,b\}$ is the same as  for 
$\{x,y,z,u,v\}\setminus \{c,d\}$.
It follows from Condition (3) in the definition of a symbolic ternary metric that, 
for any vertex of the graph, 
either 2 incident edges have one color and the other 2 edges have another 
color,
or all 4 incident edges have the same color. By symmetry, there are exactly five 
non-isomorphic graph representations.

\begin{lemma}\label{lem:graph}
Let the edges of a $K_5$ be colored such that for each vertex, the 4 
incident edges  
are either colored by the same color, or 2 of them colored by one color and 
the other 2 by another color.
Then there are exactly 5 non-isomorphic colorings, and they are depicted in 
Figure~\ref{fig:5-subset-full}. 

\begin{figure}[ht!]
\centering  
\includegraphics[width=0.9\textwidth]{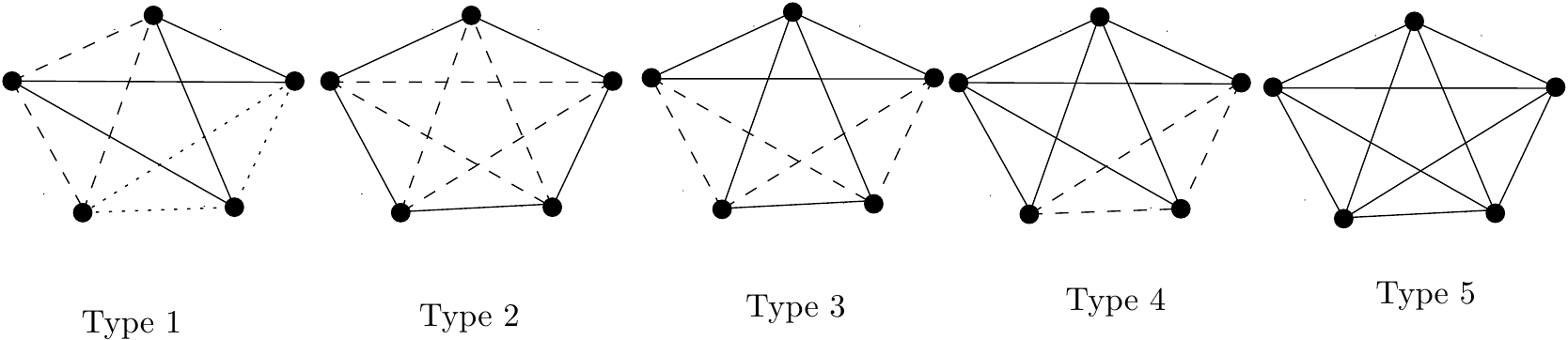} 
\caption{The 5 non-isomorphic colorings of $K_5$ for which every color class induces an Eulerian graph. Note that in type 1 there 
are 3 types of edges, solid edges, dotted edge
and dashed edges} 
 \label{fig:5-subset-full} 
\end{figure}

\end{lemma}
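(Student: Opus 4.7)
The plan is to reformulate the local degree condition as a constraint on each color class, then enumerate all resulting edge partitions of $K_5$ up to isomorphism. The hypothesis says that at every vertex of $K_5$ the four incident edges split according to the color partition into parts of size $4$ or $2+2$; equivalently, in each individual color class every vertex has even degree. Thus each color class is an \emph{even subgraph} of $K_5$ (a disjoint union of cycles together with isolated vertices). I would first classify the even subgraphs of $K_5$ up to isomorphism by edge count. Subgraphs with $1$ or $2$ edges necessarily have an odd-degree vertex, and by complementation in $K_5$ (where every vertex has degree $4$, so the complement of an even subgraph is again even) the same rules out $8$- and $9$-edge even subgraphs. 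A short check yields the list $\emptyset$, $C_3$ (3 edges), $C_4$ (4 edges), $C_5$ (5 edges), the bowtie $B$ of two triangles sharing a single vertex (6 edges), $\overline{C_3}$ (7 edges), and $K_5$ itself.

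Next I would enumerate all partitions of $E(K_5)$ into nonempty even parts, organized by the number $k$ of colors actually used. For $k=1$ the only option is the monochromatic $K_5$. For $k=2$ the two edge counts must sum to $10$ with each part of size at least $3$, so the only possibilities are $(3,7)$, $(4,6)$ and $(5,5)$, realized by $C_3/\overline{C_3}$, $C_4/B$, and $C_5/C_5$ respectively. For $k=3$ the only admissible composition is $3+3+4$; two edge-disjoint triangles inside $K_5$ cannot be vertex-disjoint (only five vertices are available) and cannot share an edge (disjoint color classes), so they share exactly one vertex forming a bowtie, and the remaining four edges span the complete bipartite graph $K_{2,2}$ between the two pairs of non-central vertices, which is a $C_4$. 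Finally $k\ge 4$ would require at least $4\cdot 3 = 12 > 10$ edges, so no such coloring exists. This produces $1+3+1 = 5$ types, matching Figure~\ref{fig:5-subset-full}.

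The main obstacle is verifying that within each edge-count profile there is a unique coloring up to vertex relabeling and color permutation. I would handle this by invoking the transitive action of $S_5$ on the triangles, $4$-cycles, $5$-cycles and bowties of $K_5$ (each a short orbit-counting exercise). Once the "large" color class is fixed, everything else is forced: for $k=2$ the second class is the complement; for $k=3$ the two triangle colors are interchanged by the $\mathbb{Z}/2\mathbb{Z}$ symmetry swapping the two wings of the bowtie, so they represent a single colored configuration up to isomorphism. This pins down the five configurations as drawn and completes the proof.
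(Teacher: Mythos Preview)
Your proof is correct and follows essentially the same approach as the paper: both observe that the local condition forces every color class to be an Eulerian (even) subgraph of $K_5$, then enumerate the possible partitions of the ten edges by the number of colors and the resulting edge counts. Your treatment is somewhat more detailed---you explicitly list all even subgraphs of $K_5$ and justify uniqueness of each type via the transitive action of $S_5$---whereas the paper argues more tersely, but the underlying argument is the same.
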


\begin{proof}

It follows from the condition on the coloring that every color class induces 
an {\em Eulerian} subgraph (a graph where all vertices have even 
degree) of $K_5$. Therefore, ignoring isolated vertices, every such induced subgraph 
either is a cycle or it contains a vertex of degree four. Since there are only 
ten edges, the only way to have three color classes is two triangles and one 
4-cycle. In that case each of the triangles must contain two non-adjacent 
vertices of the 4-cycle and the vertex that is not in the 4-cycle, thus we get a 
coloring isomorphic to Type 1 in Figure~\ref{fig:5-subset-full}. If there are 
exactly two color classes, then one of them has to be a cycle and the other one 
its complement. This yields Types 2, 3, and 4, if the length of the cycle is 5, 
4, and 3, respectively. Finally, if there is only one color, we get Type 5.

\end{proof}

Note that the vertices are not labeled and it does not matter which color we 
are using. 

We will prove Theorem~\ref{thm:ultra-ternary} by obtaining a quartet system 
from any symbolic ternary metric. More precisely, we say that the symbolic 
ternary metric $\delta$ on $X$ {\em generates} the quartet $xy|zu$ if either 
$\delta(x,z,u)=\delta(y,z,u) \neq \delta(x,y,z)=\delta(x,y,u)$, or 
$|\{\delta(x,y,z),\delta(x,y,u),\delta(x,z,u),\delta(y,z,u)\}|=1$ and there is $e \in X$ 
such that 
$$\delta(x,y,e)=\delta(x,y,z)=\delta(x,y,u)=\delta(x,z,u)=\delta(z,u,e)=\delta(y
,z,u)$$ 
 $$\neq \delta(x,u,e)=\delta(x,z,e)=\delta(y,z,e)=\delta(y,u,e).$$
In the latter case, we say that $e$ {\em resolves} $x,y,z,u$. Note that the 3-sets 
obtained by adding $e$ to the pairs of the generated quartet both have the same 
$\delta$-value as the subsets of $\{x,y,z,u\}$. 
The following lemma 
will show that the set of all quartets generated by a symbolic ternary metric is 
thin. 

\begin{lemma} \label{lem:quartet2}
Let $\delta: X\times X\times X \to M^{\odot}$ be a symbolic ternary metric 
and let $x,y,z,u\in X$ be four different taxa with 
$|\{\delta(x,y,z),\delta(x,y,u),\delta(x,z,u),\delta(y,z,u)\}|= 1$. Let 
$e,e' \in X-\{x,y,z,u\}$ such that $\{x,y,z,u,e\}$ and $\{x,y,z,u,e'\}$ are 
both 4-6-partitioned, and let 
$$\delta(x,y,e)=\delta(x,y,z)=\delta(x,y,u)=\delta(x,z,u)=\delta(z,u,e)=\delta(y
,z,u)$$ 
 $$\neq \delta(x,u,e)=\delta(x,z,e)=\delta(y,z,e)=\delta(y,u,e).$$
Then we also have 
$$\delta(x,y,e')=\delta(x,y,z)=\delta(x,y,u)=\delta(x,z,u)=\delta(z,u,e')=\delta(y
,z,u)$$ 
 $$\neq \delta(x,u,e')=\delta(x,z,e')=\delta(y,z,e')=\delta(y,u,e').$$
\end{lemma}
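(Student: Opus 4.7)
The plan is to derive a contradiction from the assumption that $e'$ resolves $\{x,y,z,u\}$ with a split different from the split $xy|zu$ induced by $e$. Throughout, write $\alpha$ for the common value of $\delta$ on the four $3$-subsets of $\{x,y,z,u\}$ and $\beta$ for the common value $\delta(x,z,e)=\delta(x,u,e)=\delta(y,z,e)=\delta(y,u,e)$.

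The first step is to apply Lemma~\ref{lem:graph} to the $5$-subset $\{x,y,z,u,e'\}$. Conditions (3) and (4) exclude Types~2 and~4 of Figure~\ref{fig:5-subset-full} (the $5$-$5$ and $3$-$7$ partitions), so the only way for a $5$-subset to be $4$-$6$ partitioned is via Type~3, in which the minority $4$-edge color class is a $4$-cycle on four of the five vertices. The four $3$-sets of $\{x,y,z,u\}$, all of value $\alpha$, correspond precisely to the four edges from $e'$ to $\{x,y,z,u\}$, i.e.\ to a star of degree $4$ at $e'$. Since a $4$-cycle has maximum degree $2$, this star must sit inside the majority $6$-edge color class. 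Hence $e'$ is the ``isolated'' vertex of the Type~3 coloring, the $4$-cycle is a $K_{2,2}$ inside $\{x,y,z,u\}$, and the bipartition of that $K_{2,2}$ defines a $2$-$2$ split of $\{x,y,z,u\}$ associated with $e'$.

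To force this $e'$-split to equal $e$'s split $xy|zu$, I would assume the contrary. Exploiting the symmetry $z\leftrightarrow u$, which preserves all the hypotheses and exchanges the two ``wrong'' splits, I may assume $e'$'s split is $xz|yu$, with associated second value $\beta'\neq\alpha$. The next step is to apply Condition (3) to the six $4$-subsets of the form $\{a,b,e,e'\}$ with $\{a,b\}\subseteq\{x,y,z,u\}$: in each, two of the four $\delta$-values are already pinned down by the $e$- and $e'$-colorings, and whenever those two values differ, Condition (3) forces the $4$-subset to be $2$-$2$ partitioned. Comparing the constraints from $\{x,y,e,e'\}$, $\{x,z,e,e'\}$ and $\{z,u,e,e'\}$ first yields $\beta=\beta'$; then the four constraints coming from $\{x,y,e,e'\}$, $\{x,z,e,e'\}$, $\{y,u,e,e'\}$ and $\{z,u,e,e'\}$ together force the quadruple $(\delta(x,e,e'),\delta(y,e,e'),\delta(z,e,e'),\delta(u,e,e'))$ to equal either $(\alpha,\beta,\beta,\alpha)$ or $(\beta,\alpha,\alpha,\beta)$.

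The last step is to find, for each of these two quadruples, a $5$-subset of $\{x,y,z,u,e,e'\}$ whose ten $3$-subsets realise a $5$-$5$ partition between $\alpha$ and $\beta$, contradicting Condition (4). A direct tally shows that $(\alpha,\beta,\beta,\alpha)$ achieves this on $\{x,z,u,e,e'\}$ and $(\beta,\alpha,\alpha,\beta)$ on $\{x,y,z,e,e'\}$. The main obstacle is the bookkeeping in the middle step, where three a priori distinct values $\alpha,\beta,\beta'$ have to be tracked across the six $4$-subsets before collapsing to $\beta=\beta'$, and one must then spot, for each surviving quadruple, exactly the $5$-subset that realises the forbidden $5$-$5$ partition. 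Once those two short tallies are performed the proof is complete.
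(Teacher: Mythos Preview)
Your proof is correct and follows the same overall strategy as the paper: assume $e'$ induces a split different from $xy|zu$, pin down the values $\delta(\cdot,e,e')$, and exhibit a $5$-$5$ partitioned $5$-set contradicting Condition~(4). The execution, however, is organised differently. You invoke Lemma~\ref{lem:graph} on $\{x,y,z,u,e'\}$ to get Type~3 immediately, which absorbs what the paper treats as separate Cases~(1) and~(2). You then extract the unknowns $\delta(a,e,e')$ purely from Condition~(3) applied to the six $4$-sets $\{a,b,e,e'\}$, and this forces $\beta=\beta'$ before any $5$-set is examined; the paper instead keeps $\beta\neq\beta'$ as a separate Case~(3a) and analyses the $5$-set $\{y,z,u,e,e'\}$ via Lemma~\ref{lem:graph} to determine three of the four unknowns at once. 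Your route yields two surviving quadruples to kill, while the paper's $5$-set analysis (together with Condition~(4) on $\{y,z,u,e,e'\}$) already eliminates your second quadruple $(\beta,\alpha,\alpha,\beta)$ and leaves only one. Both arguments end at the same forbidden $5$-$5$ configuration on $\{x,z,u,e,e'\}$.

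One small expository slip: you write that ``Conditions~(3) and~(4) exclude Types~2 and~4''. Condition~(4) excludes Type~2, but nothing excludes Type~4 --- it is a perfectly legal configuration. This does not affect your argument, since the hypothesis that $\{x,y,z,u,e'\}$ is $4$-$6$ partitioned already singles out Type~3 among the five possibilities of Lemma~\ref{lem:graph}; just drop the reference to excluding Type~4.
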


\begin{proof}

We already know that
$|\{\delta(x,y,z),\delta(x,y,u),\delta(x,z,u),\delta(y,z,u)\}|= 1$ and 
 $\{x,y,z,u,e'\}$ is 4-6-partitioned.
 
So there are three possible cases for the values of $\delta$ on  $\{x,y,z,u,e'\}$.

(1) $\delta(x,y,z)=\delta(x,y,u)=\delta(x,z,u)=\delta(y,z,u)$ and the rest 6 are equal. 
Then consider $\delta$ on $\{x,y,z,e'\}$, if it is 1-3 partitioned instead of 2-2 partitioned, 
then it contradicts the definition of symbolic ternary metric, thus this case would not happen.

(2) $\delta(x,y,z)=\delta(x,y,u)=\delta(x,z,u)=\delta(y,z,u) =\delta(e',a,b)=\delta(e',a,c)$ where
$\{a,b,c\}\in {\{x,y,z,u\} \choose 3}$. There are totally 12 different cases. Since $x,y,z,u$ are symmetric, w.l.o.g., we assume
$\delta(x,y,z)=\delta(x,y,u)=\delta(x,z,u)=\delta(y,z,u)=\delta(e',x,y)=\delta(e',x,z)$ and the rest are equal.
Then consider $\delta$ on $\{e',x,y,z\}$, if it is 1-3 partitioned instead of 2-2 partitioned, 
then it contradicts to the definition of symbolic ternary metric, thus this case would not happen.

(3) $\delta(x,y,z)=\delta(x,y,u)=\delta(x,z,u)=\delta(y,z,u) =\delta(e',a,b)=\delta(e',c,d)$ where
$\{a,b,c,d\}\in {\{x,y,z,u\} \choose 4}$. There are totally 3 different cases.
Suppose the statement of the lemma is wrong. Then because $x,y,z,u$ are symmetric w. l. 
o. g. we can assume that  
$$\delta(x,z,e')=\delta(x,y,z)=\delta(x,z,u)=\delta(y,u,e')=\delta(x,y,u)=\delta(y
,z,u)$$ 
 $$\neq \delta(x,u,e')=\delta(x,y,e')=\delta(y,z,e')=\delta(z,u,e').$$

 
 Case (3a): $\delta(x,u,e)\neq \delta(x,u,e')$. We assume that 
 $\delta(x,u,e)$ is dashed, $\delta(x,u,e')$ is dotted, and 
 $\delta(x,y,z)$ is solid. 
Since $\delta $ is a symbolic 
ternary metric, by Lemma~\ref{lem:graph}, the graph representation of 
 $\{y,z,u,e,e'\}$ has to be Type 1, so the color classes are 
 one 4-cycle and two 3-cycles. The values of $\delta$ 
 for the sets that contain at most one of $e$ and $e'$ are shown in 
 Figure~\ref{fig:example_lem2}. There  is a path of 
 length 3 that is colored with $\delta(x,y,z)$ (solid), and there are paths of length 2
 colored with $\delta(x,u,e)$ (dashed) and $\delta(x,u,e')$ (dotted), 
 respectively. It follows that 
 we only can get Type 1 by coloring the edges connecting the end vertices 
 of each of those paths with the same color as the edges on the path. We get 
  $\delta(u,e,e')=\delta(x,y,z)$ (solid),  $\delta(z,e,e')=\delta(x,u,e')$ (dotted), and  
  $\delta(y,e,e')=\delta(x,u,e)$ (dashed). Now doing the same analysis for $\{x,y,z,e,e'\}$ 
  yields $\delta(z,e,e')=\delta(x,u,e)$, in contradiction to $\delta(z,e,e')=\delta(x,u,e')$. 

%
\begin{figure}[ht!]
\centering  
\includegraphics[width=0.4\textwidth]{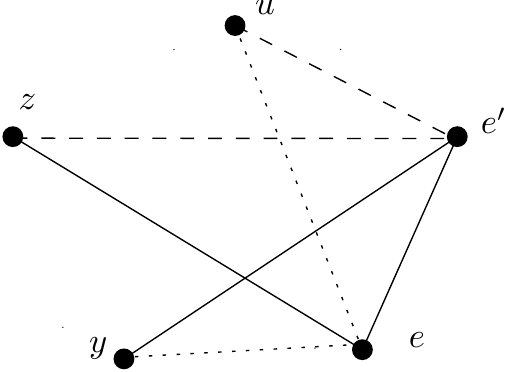} 
\caption{The partial coloring of $K_5$ as described in Case (a).} 
\label{fig:example_lem2} 
\end{figure}

Case (3b): $\delta(x,u,e)= \delta(x,u,e')$. The graph representation of $\{y,z,u,e,e'\}$ 
can be obtained from Figure~\ref{fig:example_lem2} by identifying the colors dashed and dotted. 
It contains a path of length 3 
of edges colored with $\delta(x,y,z)$ and a path of length 4 of edges colored 
with $\delta(x,u,e)$. Since a path is not an Eulerian graph, both colors must be 
used for at least one of the remaining three edges, thus Type 1 is not possible.

Due to $\delta$ being a symbolic ternary map,
$\{y,z,u,e,e'\}$ is not 5-5-partitioned, and since there are only two colors
with at least 4 edges of one color and 5 edges of the other color, Lemma~\ref{lem:graph}
implies that the corresponding graph representation must be Type 2 and
therefore, $\{y,z,u,e,e'\}$ is 4-6-partitioned.
We get $\delta(u,e,e')=\delta(x,y,z)$, 
$\delta(y,e,e')=\delta(z,e,e')=\delta(x,u,e)$. Now we consider the graph representation 
of $\{x,z,u,e,e'\}$, and we observe that the edges colored with $\delta(x,y,z)$ contain a 
path of length 4, and the edges colored with $\delta(x,u,e)$ contain a 5-cycle. Hence, 
$\{x,z,u,e,e'\}$ is 5-5-partitioned, in contradiction to Condition (4). 
\end{proof}

\begin{proof}[Proof of Theorem~\ref{thm:ultra-ternary}]
By the definitions of the median, the ternary map $\delta_{(T;t)}$ 
associated with a discriminating
symbolically dated phylogenetic tree $(T,t)$ on $X$ satisfies Conditions 
(1) and (2). For any distinct leaves $x,y,z,u$, the smallest subtree of $T$ connecting 
those four leaves has at most two vertices of degree larger than two. If there are two such vertices,
then each of them has degree 3 and is the median of two 3-sets in $\{x,y,z,u\}$.
Therefore, $\delta_{(T;t)}$ satisfies Condition (3).

For any 5 distinct leaves, the smallest subtree of $T$ connecting them either has 
three vertices of degree 3, or one vertex of degree 3 and one of degree 4, or one 
vertex of degree 5, while all other vertices have degree 1 or 2. The first case is depicted in 
Figure~\ref{fig:5-point}. There $v_1$ is the median for the 3-sets that contain $x_1$ and $x_2$, 
$v_3$ is the  median for the 3-sets that contain $z_1$ and $z_2$, and $v_2$ is the median for the 
remaining four 3-sets. Hence, either $\{x_1,x_2,y,z_1,z_2\}$ is 4-6-partitioned 
(if $t(v_1)=t(v_3)$), or 
there are three different values of $\delta_{(T;t)}$ within those five taxa. For the other two cases, 
the set of five taxa is either 3-7-partitioned or $\delta_{(T;t)}$ is constant on all its subsets with 3 taxa. 
Hence, no subset of $X$ of cardinality 
five is 5-5-partitioned by $\delta_{(T;t)}$, thus $\delta_{(T;t)}$ satisfies Condition (4). 

%
%
On the other hand, let $\delta$ be a symbolic ternary metric on $X$. 
By Lemma~\ref{lem:graph}, taking any 5-element subset of $X$,
 the possible graph representations of the delta system satisfying (1), (2), and (3) 
are 
shown in Figure~\ref{fig:5-subset-full}. Except for Type 2, all other types 
satisfy (4).

For the first type, the delta system is 
$\delta(y,z,u)=\delta(x,y,z)=\delta(w,y,z)\neq 
\delta(w,x,z)\\
=\delta(w,x,u)=\delta(w,x,y)\neq \delta(x,z,u)=\delta(w,z,u)
=\delta(x,y,u)=\delta(w,y,u)\neq \delta(y,z,u)$. 
The corresponding quartet system is $\{xw|yu,xw|zu,xw|yz,uw|yz,xu|yz\}$.

For the third type, the delta system is 
$\delta(y,z,u)=\delta(x,z,u)=\delta(w,z,u)= 
\delta(x,y,w)\\
=\delta(y,u,w)=\delta(y,z,w)\neq \delta(x,y,z)=\delta(w,x,z)
=\delta(x,y,u)=\delta(w,x,u)$. 
The corresponding quartet system is $\{wy|xu,wy|xz,xy|zu,wy|zu,wx|zu\}$.

For the fourth type, the delta system is 
$\delta(w,x,z)=\delta(w,x,u)=\delta(w,x,y)\\
\neq 
\delta(y,z,u)=\delta(x,y,z)=\delta(x,z,u)= \delta(w,z,u)=\delta(w,y,z)
=\delta(x,y,u)=\delta(w,y,u)$. 
The corresponding quartet system is $\{xw|yu,xw|zu,xw|yz\}$.

For the fifth type, the delta system is 
$\delta(y,z,u)=\delta(x,y,z)=\delta(w,y,z)= 
\delta(w,x,z)\\
=\delta(w,x,u)=\delta(w,x,y)= \delta(x,z,u)=\delta(w,z,u)
=\delta(x,y,u)=\delta(w,y,u)$. 
The corresponding quartet system is $\emptyset$.

All quartet systems are thin, transitive, and saturated. Indeed, the delta systems 
of Types 1 and 3 generate all quartets displayed by a binary tree, Type 4 generates 
all quartets  displayed by a tree with exactly one interior edge, and Type 5 corresponds to the  
star tree with 5 leaves. Now we take the union of all quartets generated 
by $\delta$. The resulting quartet system is thin in view of Lemma~\ref{lem:quartet2},
it is easy to see that it is also transitive and saturated, and by Theorem~\ref{thm:quartet2}, 
every delta system satisfying Conditions (1), (2), (3) and (4) 
uniquely determines a phylogenetic tree $T$ on $X$. 

It only remains to show that two 3-element subsets of $X$ that have the same median in $T$ must be 
mapped to the same value of $\delta$, since then we can define $t$ to be the dating map with 
$t(v)=\delta(x,y,z)$ for every interior vertex $v$ of $T$ and for all 3-sets $\{x,y,z\}$ whose median is $v$. 
It suffices to consider two sets which intersect in two taxa, as the general 
case follows by exchanging one taxon up to three times. We assume that $v$ is the median of both, 
$\{w,x,y\}$ and $\{w,x,z\}$. If $\delta(w,x,y) \neq \delta(w,x,z)$, then by the definition of
symbolic ternary metric, $\{x,y,w,z\}$ is 2-2-partitioned by $\delta$, thus $\delta$ generates one of the quartets 
$wy|xz$ and $wz|xy$, thus $T$ must display that quartet. However, in both cases $\{w,x,y\}$ and $\{w,x,z\}$ 
do not have the same median in $T$. We also claim that the associated $t$ is discriminating. Suppose otherwise,
there is an edge $uv$ in $T$ such that $t(u)=t(v)$. Thus for any four leaves $x_1,x_2,y_1,y_2$ such that
$\text{med}(x_1,x_2,y_1) = \text{med}(x_1,x_2,y_2) = u$
and $\text{med}(y_1,y_2,x_1) = \text{med}(y_1,y_2,x_2) = v$, 
the quartet $x_1x_2|y_1y_2$ is displayed by $T$.
Further we have
$\delta(x_1,x_2,y_1)=\delta(x_1,x_2,y_2)=\delta(y_1,y_2,x_1)=\delta(y_1,y_2,x_2)$.
Because the quartet $x_1x_2|y_1y_2$ is generated by $\delta$, there is a leave $e$ which resolves $x_1x_2y_1y_2$.
Since $uv$ is an edge in $T$, there exists $i$ and $j$
such that $\text{med}(x_i,y_j,e) \in \{u,v\}$, we have $\delta(x_i,y_j,e)=\delta(x_1,x_2,y_1)$,
which means $e$ cannot resolve $x_1x_2y_1y_2$, a contradiction.
Hence a thin, transitive and saturated quartet system uniquely determines a discriminating symbolic dated tree.
%
\end{proof}

\begin{figure}[ht!]
\centering  
\includegraphics[width=0.5\textwidth]{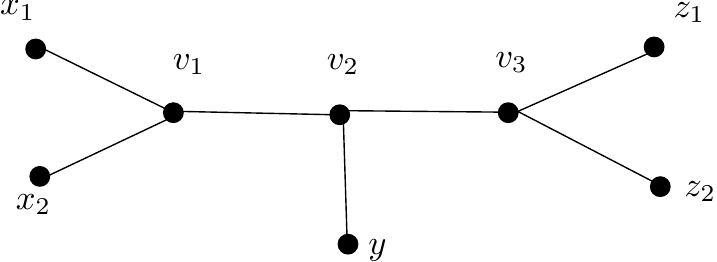} 
\caption{The leaves and median vertices for a 5-taxa binary tree.} 
 \label{fig:5-point} 
\end{figure}

The set of quartets generated by Type 2 does not satisfy the condition of being saturated 
from Theorem~\ref{thm:quartet2}.
Without loss of generality, label the vertices by $x,y,z,u,w$ as in Figure~\ref{fig:5-subset-2}. 
Then the quartet system is $\{yw|zu,xu|yz,xz|uw,xy|zw,xw|yu\}$. In order 
to be saturated, the presence of $xu|yz$ would induce that we have $xw|yz$ 
or $xu|yw$, but we have $xy|zw$, $xw|yu$ instead.

\begin{figure}[ht!]
\centering  
\includegraphics[width=0.4\textwidth]{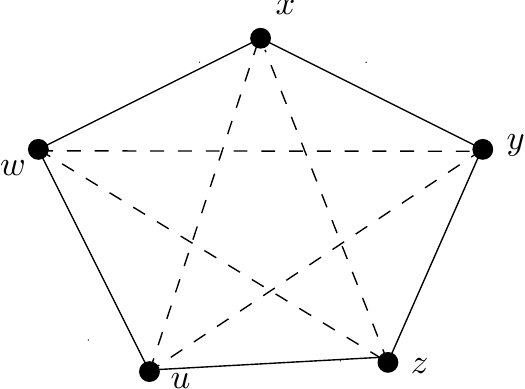} 
\caption{The graph representation of a ternary map satisfying Conditions (1), (2), and (3), but not (4).} 
 \label{fig:5-subset-2} 
\end{figure}

\section{Reconstructing a binary phylogenetic tree}
\label{sect:binary}

The aim of this section is to prove Theorem~\ref{thm:binary}.

A quartet system $Q$ on $X$ is \textit{complete}, if
$$|\{Q\cap \{ab|cd,ac|bd,ad|bc\}\}| = 1$$
 holds for all  $\{a,b,c,d\}\in {X \choose 4}$. Using the easy observation that 
 a phylogenetic tree is binary if and only if it displays a quartet for every 4-set, 
 the following result is a direct consequence of Theorem~\ref{thm:quartet2}.

\begin{corollary}~\label{cor:quartet}
  A quartet system $Q \subseteq Q(X)$ is of the form $Q = Q(T)$ for some
binary phylogenetic tree $T$ on $X$ if and only if 
$Q$ is complete, transitive, and saturated. 
\end{corollary}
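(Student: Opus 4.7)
The plan is to derive Corollary~\ref{cor:quartet} directly from Theorem~\ref{thm:quartet2} by exploiting the observation recorded just before the statement: a phylogenetic tree is binary if and only if it displays a quartet on every 4-subset of its leaf set. The key bookkeeping is that completeness of $Q$ decomposes as the conjunction of thinness (at most one quartet per 4-set) with the existence of at least one quartet per 4-set.

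For the forward direction, let $T$ be a binary phylogenetic tree on $X$ and $Q=Q(T)$. By Theorem~\ref{thm:quartet2}, $Q$ is thin, transitive, and saturated. To upgrade thinness to completeness, I would take any 4-subset $\{a,b,c,d\}\subseteq X$ and consider the smallest subtree of $T$ spanning these leaves. Since $T$ is binary, this subtree has a unique interior edge whose removal separates the four leaves into two pairs, so $T$ displays exactly one quartet on $\{a,b,c,d\}$. Combined with thinness, this yields completeness.

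For the converse, suppose $Q$ is complete, transitive, and saturated. Completeness immediately implies thinness, so Theorem~\ref{thm:quartet2} produces a phylogenetic tree $T$ on $X$ with $Q=Q(T)$. It remains to show $T$ is binary. I would argue by contrapositive: if $T$ has an interior vertex $v$ of degree at least $4$, pick leaves $a,b,c,d$ lying in four distinct components of $T-v$. Then for each of the three partitions of $\{a,b,c,d\}$ into two pairs, the two paths joining those pairs both pass through $v$, so they are not vertex-disjoint; hence $T$ displays no quartet on $\{a,b,c,d\}$, contradicting completeness. Therefore every interior vertex of $T$ has degree $3$, i.e., $T$ is binary.

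The only real content beyond citing Theorem~\ref{thm:quartet2} is the standard equivalence between binariness of $T$ and ``$T$ displays a quartet on every $4$-subset of $X$'', and I expect this to be the only mild obstacle; once it is in place, the corollary is just Theorem~\ref{thm:quartet2} with ``thin'' replaced by the stronger ``complete''.
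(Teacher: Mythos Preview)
Your proposal is correct and follows exactly the route the paper indicates: it combines Theorem~\ref{thm:quartet2} with the observation that a phylogenetic tree is binary if and only if it displays a quartet on every 4-subset, using the decomposition of completeness into thinness plus the existence of at least one quartet per 4-set. The paper treats the corollary as an immediate consequence and gives no further argument, so your write-up simply spells out the details it leaves implicit.
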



%
%
%

Condition (*) ensures that a ternary metric 
$\delta$ 
generates a quartet for every set of four taxa, even if $\delta$ is constant on all 
of its 3-taxa subsets. In view of Lemma~\ref{lem:quartet2} we also have that 
$\delta$ can not generate two different quartets for the same 4-set. Hence, 
we have the following corollary. 

\begin{corollary}\label{cor:complete}
 A symbolic ternary metric $\delta:X\times X \times X\to M^{\odot}$ 
that satisfies Condition~(*) generates a complete quartet system on $X$.
\end{corollary}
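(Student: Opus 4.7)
The plan is to show that, for every 4-subset $\{a,b,c,d\} \subseteq X$, exactly one of the three quartets on $\{a,b,c,d\}$ is generated by $\delta$; completeness is then immediate. By Condition (3), the set $\{\delta(a,b,c), \delta(a,b,d), \delta(a,c,d), \delta(b,c,d)\}$ has at most two elements, and the argument splits cleanly into two cases according to whether this size is $2$ or $1$.

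In the two-value case, Condition (3) also asserts that $\{a,b,c,d\}$ is $2$-$2$ partitioned by $\delta$. If one indexes each 3-subset of $\{a,b,c,d\}$ by the single element it omits, a $2$-$2$ partition of the four 3-subsets corresponds to a partition of $\{a,b,c,d\}$ into two pairs; there are exactly three such pair partitions, one for each possible quartet. The first clause of the definition of ``generates'' then outputs the unique quartet whose pair structure matches the observed $2$-$2$ split. The second clause cannot fire simultaneously, since it requires all four $\delta$-values to coincide, contradicting the assumption of two distinct values.

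In the one-value case, Condition~(*) furnishes an element $e \in X$ resolving $a,b,c,d$, so at least one quartet is generated via the second clause. Lemma~\ref{lem:quartet2} then guarantees that any other element $e'$ which also makes $\{a,b,c,d,e'\}$ be $4$-$6$ partitioned resolves the \emph{same} quartet, so the quartet generated is unique. Conversely, the first clause cannot apply here because it requires two distinct values on the 3-subsets of $\{a,b,c,d\}$. The heavy lifting is entirely in Lemma~\ref{lem:quartet2}; the remaining work is the easy case distinction above, together with the observation that the two clauses in the definition of ``generates'' are driven by disjoint hypotheses and therefore cannot collide. I do not foresee any real obstacle, since Condition~(*) was precisely tailored to cover the single gap (the constant case) left open by the first clause.
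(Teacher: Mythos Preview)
Your proposal is correct and follows essentially the same approach as the paper: the paper's proof is the short paragraph immediately preceding the corollary, which notes that Condition~(*) guarantees a generated quartet even when $\delta$ is constant on the four 3-subsets, and invokes Lemma~\ref{lem:quartet2} to rule out two distinct quartets on the same 4-set. You have simply spelled out the two-value versus one-value case split and the mutual exclusivity of the two clauses more explicitly than the paper does.
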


Now we prove Theorem~\ref{thm:binary}.

\begin{proof}

Let $(T,t)$ be a symbolically dated binary phylogenetic tree. 
By Theorem~\ref{thm:ultra-ternary}, $\delta_{(T;t)}$ is a
symbolic ternary metric. Since $T$ is binary, it displays a 
quartet for every 4-taxa subset $\{x,y,z,u\}$ of $X$. Assume 
that $T$ displays $xu|yz$, thus 
$\text{med}(x,y,z) \neq \text{med}(x,y,u)$. If 
$|\{\delta_{(T;t)}(x,y,z),\delta_{(T;t)}(x,y,u),\delta_{(T;t)}(x,z,u),\delta_{(T;t)}(y,z,u)\}|= 1$, 
then \\there is at least one vertex $v$ on the path in $T$ connecting 
$\text{med}(x,y,z)$ and $\text{med}(x,y,u)$ with $t(v) \neq t(\text{med}(x,y,z))$, as 
$t$ is discriminating. Hence, there is a leaf $e \in X$ such that
$v = \text{med}(x,y,e)$. 
It follows that the set $\{x,y,z,u,e\}$ is 4-6 partitioned by $\delta_{(T;t)}$, 
thus $\delta_{(T;t)}$ satisfies Condition (*).

On the other hand, if $\delta$ is a symbolic 
ternary metric on $X$  and satisfies (*),
then by Corollary~\ref{cor:complete}, it corresponds to a unique complete 
quartet system, thus it encodes a binary phylogenetic tree $T$ in view of 
Corollary~\ref{cor:quartet}. 
As in the last paragraph of the proof of 
Theorem~\ref{thm:ultra-ternary}, we can define a 
dating map $t$ by $t(v)=\delta(x,y,z)$ for every interior vertex $v$ of $T$ and for all 
3-sets $\{x,y,z\}$ whose median is $v$. Hence, $(T,t)$ is a symbolically dated binary 
phylogenetic tree.  
\end{proof}

\section{The recognition of pseudo-cherries}
\label{sect:pseudo-cherries}
In Theorem~\ref{thm:ultra-ternary}, we have established a 1-to-1 correspondence 
between symbolically dated phylogenetic trees and symbolic ternary metrics on $X$, and 
a bijection is given by mapping $(T,t)$ to $d_{(T,t)}$. To get the inverse of this map, we can 
first compute the set of all quartets generated by a symbolic ternary metric, and then 
apply an algorithm that reconstructs a phylogenetic tree from the collection of all its displayed 
quartets. Finally, the dating map is defined as in our proof of Theorem~\ref{thm:ultra-ternary}. 
This approach would correspond to first extracting rooted triples from a symbolic ultrametric 
and then reconstruct the rooted tree (see Section 7.6 of~\cite{Semple2003}). However, a 
more direct way to reconstruct the corresponding tree from a symbolic ultrametric was presented 
in~\cite{Hellmuth2013}. It is based on identifying maximal sets of at least two taxa that are 
adjacent to the same interior vertex, so-called {\em pseudo-cherries}. These can iteratively 
be identified into a single new taxon, thereby reconstructing the corresponding tree in a bottom-up fashion.

The main advantage of such an algorithm is that it might be used to heuristically  construct a tree, even 
if the input is not a symbolic ternary metric. In terms of running time, using pseudo-cherries is also 
slightly better than having to compute all $O(n^4)$ quartets, but the $O(n^3)$ input size limits the speed 
of every algorithm that deals with ternary maps. 

%

Here we only show how to find the pseudo-cherries of $T$ from a symbolic ternary metric 
$d_{(T,t)}$. An algorithm to reconstruct $T$ can be designed exactly as in~\cite{Hellmuth2013} and is 
therefore omitted. 
We point out that it is not necessary to check Condition (4) of a symbolic ternary metric, as a violation would 
make the algorithm recognize that the ternary map does not correspond to a tree.

Given an arbitrary symbolic ternary map  $\delta: X\times X\times X\to M^\odot$ satisfying 
Conditions (1), (2), and (3).
For $x,y \in X$ and $m\in  M^{\odot}$, we say that $x$ and $y$ are {\em $m$-equivalent}, 
if there is $z \in X$ such that $\delta(x,y,z)=m$, and for $u,v \in X-x-y$, $\delta (x,u,v)=m$ if and only if 
$\delta (y,u,v)=m$. 
\begin{lemma} \label{lem:transitive}
If $x$ and $y$ are $m$-equivalent and $y$ and $z$ are $m'$-equivalent, then $m=m'$ and $x$ and $z$ are 
$m$-equivalent. 
\end{lemma}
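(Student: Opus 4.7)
The plan is to prove the two conclusions in sequence: first that $m = m'$, and then, assuming this, that $x$ and $z$ are $m$-equivalent. The harder part is the first, where the main obstacle is the one-sidedness of the defining iff-clauses: the $x$--$y$ equivalence speaks only of the value $m$ and the $y$--$z$ equivalence only of $m'$, so any incompatibility between $m$ and $m'$ must be extracted through the 4-point rule of Condition~(3).

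To show $m = m'$, I would argue by contradiction, assuming $m \neq m'$. Fix witnesses $w_1 \in X \setminus \{x,y\}$ with $\delta(x,y,w_1) = m$ and $w_2 \in X \setminus \{y,z\}$ with $\delta(y,z,w_2) = m'$, and split into subcases depending on whether $w_1 = z$, whether $w_2 = x$, and (when both lie outside $\{x,y,z\}$) whether $w_1 = w_2$. In every subcase I would locate a 4-subset among $\{x,y,z,w_1\}$, $\{x,y,z,w_2\}$ and $\{x,y,w_1,w_2\}$ on which two of the four triples are already pinned to $m$ and $m'$; by Condition~(3) the set of four triple-values is then exactly $\{m,m'\}$ and the 4-set is 2-2 partitioned. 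The iff-clauses from the two hypothesized equivalences then force the remaining two triple-values, and one of three things fails: the 2-2 count, a ``both-or-neither'' consequence of an iff-clause, or the at-most-two-values part of Condition~(3). As a representative slice: if $w_1 = z$ then $w_2 \neq x$ (else $\delta(x,y,z)$ would equal both $m$ and $m'$), so $\{x,y,z,w_2\}$ is 2-2 partitioned by $\{m,m'\}$; the iff-clause from $x$ and $y$ at pair $(z,w_2)$ forces $\delta(x,z,w_2) = m'$, and then the iff-clause from $y$ and $z$ at pair $(x,w_2)$ forces $\delta(x,y,w_2) = m'$, leaving three $m'$-triples and one $m$-triple, not a 2-2 split. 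The other subcases run along the same template; the five-taxa subcase $w_1 \neq w_2$ first pins $\delta(x,y,z) = m'$ by analyzing $\{x,y,z,w_2\}$ (possibly through an intermediate third value that Condition~(3) rules out at the final step), and then falls back to $\{x,y,z,w_1\}$ to reach the same contradiction.

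Once $m = m'$ is in hand, checking that $x$ and $z$ are $m$-equivalent is a short composition. For the existence witness, take any $w$ with $\delta(y,z,w) = m$ supplied by the $y$--$z$ equivalence: if $w = x$ then $\delta(x,z,y) = m$ works directly, and if $w \notin \{x,y,z\}$ the iff-clause from $x$ and $y$ at pair $(z,w)$ upgrades $\delta(y,z,w) = m$ to $\delta(x,z,w) = m$. For the iff-clause requirement, fix $u,v \in X \setminus \{x,z\}$ with $u \neq v$. If $y \notin \{u,v\}$, just chain the two given iff-clauses to obtain $\delta(x,u,v) = m \Leftrightarrow \delta(y,u,v) = m \Leftrightarrow \delta(z,u,v) = m$. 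If $y \in \{u,v\}$, say $u = y$ and $v \in X \setminus \{x,y,z\}$, chain instead through the auxiliary pair $(z,v)$: the iff-clause from $x$ and $y$ at $(z,v)$ gives $\delta(x,z,v) = m \Leftrightarrow \delta(y,z,v) = m$, while the iff-clause from $y$ and $z$ at $(x,v)$ gives $\delta(x,y,v) = m \Leftrightarrow \delta(x,z,v) = m$, and composing yields $\delta(x,y,v) = m \Leftrightarrow \delta(y,z,v) = m$, which is exactly what is required.
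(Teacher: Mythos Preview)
Your argument is correct, and the second half (transitivity once $m=m'$ is known) matches the paper essentially verbatim. For the first half, the paper avoids your case analysis on the witnesses $w_1,w_2$ altogether. Rather than assuming $m\neq m'$ and searching for a bad 4-set among $\{x,y,z,w_1,w_2\}$, it proves the stronger intermediate fact $\delta(x,y,z)=m$ directly, using only the $m$-equivalence of $x$ and $y$: if $\delta(x,y,z)\neq m$, take any witness $u$ with $\delta(x,y,u)=m$; then $\{x,y,z,u\}$ is 2-2 partitioned by Condition~(3), so exactly one of $\delta(x,z,u),\delta(y,z,u)$ equals $m$, contradicting the iff-clause at the pair $(z,u)$. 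The symmetric argument for the $y$--$z$ equivalence gives $\delta(x,y,z)=m'$, whence $m=m'$. This is essentially your subcase $w_1\neq w_2$ with the preliminary step of pinning $\delta(x,y,z)=m'$ removed: the 2-2 split on $\{x,y,z,u\}$ only needs $\delta(x,y,z)\neq m$, not $\delta(x,y,z)=m'$, so no distinction on where the witnesses sit is needed. As a bonus, the paper's route produces $\delta(x,y,z)=m$ explicitly, which immediately furnishes $y$ as the existence witness for the $m$-equivalence of $x$ and $z$.
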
 
\begin{proof}
Assume $\delta(x,y,z) \neq m$. Then let $u \in X$ with $\delta(x,y,u)=m$. Since $\delta$ is not constant on 
$\{x,y,z,u\}$, this 4-set must be 2-2-partitioned, thus exactly one of $\delta(x,u,z)=m$ and $\delta(y,u,z)=m$ 
must hold, in contradiction to $x$ and $y$ being $m$-equivalent. Hence, we have $\delta(x,y,z) = m$, and 
by symmetry we also have $\delta(x,y,z) = m'$, thus $m=m'$. In order to verify that  $x$ and $z$ must be 
$m$-equivalent, we have already shown $\delta(x,y,z) = m$. For $w,w' \in X-\{x,y,z\}$, we have
$\delta(x,w,w')=m$ if and only if $\delta(y,w,w')=m$, since $x$ and $y$ are $m$-equivalent, and we have 
$\delta(y,w,w')=m$ if and only if $\delta(z,w,w')=m$, since $y$ and $z$ are $m$-equivalent. 
Finally, we have $\delta(x,y,w)=m$ if and only if $\delta(x,z,w)=m$ if and only if $\delta(y,z,w)=m$. 
Hence $x$ and $z$ are $m$-equivalent. 
\end{proof}

We say $x,y\in X$ are {\em $\delta$-equivalent}, denoted by {\em $x\sim_{\delta} y$}, if there exists 
$m\in M^\odot$
such that $x$ and $y$ are $m$-equivalent.

\begin{lemma}
 The relation of being $\delta$-equivalent is an equivalence relation.
\end{lemma}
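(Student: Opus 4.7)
The plan is to verify the three defining properties of an equivalence relation: reflexivity, symmetry, and transitivity. Transitivity is essentially handled by Lemma~\ref{lem:transitive}, so the main task is really just assembling the three pieces cleanly; no deep calculation is required.

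First I would address reflexivity. For any $x \in X$, pick any $z \in X$ with $z \neq x$ (this exists since $|X| \geq 3$). By Condition (2), $\delta(x,x,z) = \odot$, so setting $m = \odot$ gives a candidate witness. The second requirement of $m$-equivalence, namely that $\delta(x,u,v) = m \iff \delta(x,u,v) = m$ for $u,v \in X - \{x\}$, is trivial since $y = x$. Hence $x \sim_\delta x$.

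Next, symmetry follows immediately from the symmetry of $\delta$ established in Condition (1). If $x$ and $y$ are $m$-equivalent, witnessed by $z$, then $\delta(y,x,z) = \delta(x,y,z) = m$, and the biconditional $\delta(x,u,v) = m \iff \delta(y,u,v) = m$ is symmetric in $x$ and $y$. The set $X - \{x,y\}$ is the same in both directions, so $y$ and $x$ are $m$-equivalent as well, giving $y \sim_\delta x$.

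Finally, for transitivity, suppose $x \sim_\delta y$ and $y \sim_\delta z$. Then there exist $m, m' \in M^\odot$ such that $x$ and $y$ are $m$-equivalent and $y$ and $z$ are $m'$-equivalent. Lemma~\ref{lem:transitive} directly yields $m = m'$ and that $x$ and $z$ are $m$-equivalent, so $x \sim_\delta z$.

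The main (and only) obstacle in the argument is transitivity, which is already packaged into Lemma~\ref{lem:transitive}; the reflexivity and symmetry steps are essentially bookkeeping using Conditions (1) and (2). So the proof amounts to a short three-paragraph verification that invokes the preceding lemma for the nontrivial case.
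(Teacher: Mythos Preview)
Your proof is correct and follows essentially the same approach as the paper: reflexivity via $m=\odot$ using Condition~(2), symmetry via Condition~(1), and transitivity via Lemma~\ref{lem:transitive}. The only cosmetic difference is that you explicitly invoke $|X|\geq 3$ to pick a witness $z\neq x$ for reflexivity, which is harmless but not strictly needed since $\delta(x,x,z)=\odot$ for any $z$.
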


\begin{proof}
 For any $x\in X$, since $\delta(x,x,y)=\odot$ for any $y\in X$, by definition $x$ and $x$ are $\odot$-equivalent,
 hence $x$ and $x$ are $\delta$-equivalent. Hence $\sim_{\delta}$ is reflexive. For any $x\sim_{\delta} y$, we know that there exists an $m\in M^{\odot}$
 such that $x$ and $y$ are $m$-equivalent. Since $\delta$ is symmetric, by the definition of $m$-equivalent, $y$ and $x$
 are also $m$-equivalent, thus $y\sim_{\delta} x$. Hence, $\sim_{\delta}$ is symmetric.
 To prove the transitivity of $\sim_{\delta}$, assume $x\sim_{\delta} y$ and $y\sim_{\delta} z$, by Lemma~\ref{lem:transitive}
 we know that $x\sim_{\delta} z$. Therefore, $\delta$-equivalent is an equivalence relation.
\end{proof}

Suppose that $T$ is a phylogenetic tree on $X$. Let $C\subseteq X$ be a
subset of $X$ with $|C| \ge 2$. 
We call $C$ a {\em pseudo-cherry} of $T$, if there is an interior vertex 
$v$ of $T$ such that $C$ is the set of all leaves adjacent to $v$ . 

\begin{theorem}
If $(T;t)$ is a symbolically dated phylogenetic tree, then 
a non-empty subset $C$ of $X$ is a non-trivial equivalence class of $\sim_{\delta_{(T,t)}}$
if and only if $C$ is a pseudo-cherry of $T$.

\end{theorem}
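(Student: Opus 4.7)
The plan is to prove two complementary claims about the path $[x,y]$ in $T$, where we abbreviate $\delta := \delta_{(T,t)}$. The forward direction (pseudo-cherry implies equivalence class) is a direct median computation, while the converse requires an argument by contradiction on the length of $[x,y]$.

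First I would show the easy direction: if $C$ is a pseudo-cherry with associated interior vertex $v$, and $x,y \in C$ are distinct, then $x$ and $y$ are $t(v)$-equivalent. For any third leaf $z$, the path from $z$ meets $[x,y]$ at its unique interior vertex $v$, so $\text{med}(x,y,z)=v$ and $\delta(x,y,z)=t(v)$. For any $u,w \in X \setminus \{x,y\}$, the paths $[x,u]$, $[x,w]$, $[u,w]$ all pass through $v$ and agree with $[y,u]$, $[y,w]$ beyond $v$, so $\text{med}(x,u,w)=\text{med}(y,u,w)$ and hence $\delta(x,u,w)=\delta(y,u,w)$. This yields $t(v)$-equivalence and, in particular, shows that every leaf adjacent to $v$ is $\delta$-equivalent to every other leaf adjacent to $v$.

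For the converse, suppose $x \neq y$ and $x \sim_\delta y$, say $m$-equivalent. Write $[x,y]$ as $x - w_1 - w_2 - \cdots - w_k - y$; I want to show $k=1$. Assume $k \geq 2$ for contradiction. Since $T$ has no vertex of degree $2$, every $w_i$ has a side-branch off $[x,y]$ containing at least one leaf $u_i$. A direct median computation gives that for $i<j$,
\[
\text{med}(x, u_i, u_j) = w_i, \qquad \text{med}(y, u_i, u_j) = w_j,
\]
so the $m$-equivalence condition forces $t(w_i)=m \Leftrightarrow t(w_j)=m$ for all $i\neq j$. Since $t$ is discriminating, consecutive $w_i,w_{i+1}$ receive different colors, so we cannot have $t(w_i)=m$ for all $i$; hence $t(w_i) \neq m$ for every $i$. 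But the definition of $m$-equivalence demands some $z \in X$ with $\delta(x,y,z)=m$, and for $z \neq x,y$ we have $\text{med}(x,y,z) \in \{w_1,\ldots,w_k\}$, forcing $m=t(w_j)$ for some $j$. This contradiction gives $k=1$, so $x$ and $y$ share the common adjacent interior vertex $w_1$.

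Combining both directions yields the theorem: a non-trivial equivalence class $C$ contains distinct $x,y$, which by the converse share a common interior neighbour $v$; the forward direction then shows every leaf adjacent to $v$ is equivalent to $x$, and the converse applied to any $z \sim_\delta x$ forces $z$ to be adjacent to $v$, so $C$ equals the set of leaves adjacent to $v$, a pseudo-cherry; conversely every pseudo-cherry arises this way. The main technical obstacle will be the median computation in the converse direction: one must carefully verify that side-branch leaves $u_i$ really exist at each $w_i$ (including the endpoints $w_1, w_k$, for which the no-degree-2 assumption is essential) and that the stated medians are correct.
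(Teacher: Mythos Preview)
Your proposal is correct and follows essentially the same approach as the paper. Both arguments reduce the converse direction to showing that two $m$-equivalent leaves $x,y$ must have a path $[x,y]$ with only one interior vertex, and both derive the contradiction by producing leaves $u,w$ off the path for which $\delta(x,u,w)=m$ but $\delta(y,u,w)\neq m$. The only organizational difference is that you first establish the dichotomy ``either every $w_i$ has color $m$ or none does'' and then invoke the witness $z$, whereas the paper picks a single vertex $v$ on $[x,y]$ with $t(v)\neq m$, takes one side-leaf $y$ at $v$, and pairs it directly with the $m$-equivalence witness $z$ to get the contradiction in one step; this is slightly shorter but not materially different.
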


\begin{proof}
 For the ease of notation, we let $\delta = \delta_{(T,t)}$. 
 
 Since $t$ is discriminating, the definition of a pseudo-cherry 
 immediately implies that any pseudo-cherry of $T$ must be contained in a 
 non-trivial equivalence class of $\sim_{\delta}$.

Conversely, if a non-trivial equivalence class $C$ of $\sim_{\delta}$ is not a pseudo-cherry,
then there are $x_1,x_2 \in C$ such that the path in $T$ that contains $x_1$ and $x_2$ 
has length at least 3,  
and since $t$ is discriminating, it has at least 2 interior vertices labeled by two 
different elements of $M$. Suppose that all elements of $C$ are $m$-equivalent, and 
that $v$ is an interior vertex on the path from $x_1$ to $x_2$ such that $t(v)=m'$ and 
$m' \neq m$. Further, let $y \in X$ such that $v=\text{med}(x_1,x_2,y)$. Since $x_1$ and 
$x_2$ are $m$-equivalent, there is $z \in X$ such that $\delta(x_1,x_2,z)=m$. Then the 
median $u$ of $x_1,x_2,z$ is also on the path from $x_1$ and $x_2$, and we assume 
without loss of generality that 
$u$ is on the path from $x_1$ to $v$. It follows that $\delta(x_1,y,z) =m$ but $\delta(x_2,y,z)=m'$
in contradiction to $x_1\sim_mx_2$.
\end{proof}

\section{Discussions and open questions}
\label{sect:discuss}

The proofs of our main results heavily rely on extracting the corresponding quartet set from 
a symbolic ternary metric and then checking that our Conditions (3) and (4) guarantee the 
quartet system to be thin, transitive, and saturated, and adding (*) makes the quartet system complete. 
The conditions look like (3) corresponds to thin and transitive, (4) to saturated, and (*) to complete. 
However, this is not true, and removing (4) from Theorem~\ref{thm:binary} does not necessarily 
yield a transitive complete quartet system. While for five taxa, a 5-5-partition yields the only 
non-saturated complete transitive quartet system, Lemma~\ref{lem:quartet2} does not hold without 
Condition (4). Indeed the ternary map that is visualized in Figure~\ref{fig:new} suffices 
Conditions (1), (2), (3), and (*), but it generates two quartets on each of $\{a_1,a_2,b_1,b_2\}$ 
and $\{a_1,a_2,c_1,c_2\}$. It can be shown by checking the remaining 5-sets in Case 2 of our 
proof of Lemma~\ref{lem:quartet2} that every ternary map on 6 taxa satisfying Conditions 
(1), (2), (3), (*) that does not yield a thin quartet system is isomorphic to this example.  This 
raises the question whether ternary maps satisfying these four conditions can be completely 
characterized. The hope is to observe something similar to the Clebsch trees that were observed 
by Jan Weyer-Menkhoff~\cite{Weyer2003}. As a result, a phylogenetic tree with all interior vertices 
of degree 3, 5, or 6  can be reconstructed from every transitive complete quartet set.

%

\begin{figure}[ht!]
\centering  
\includegraphics[width=0.9\textwidth]{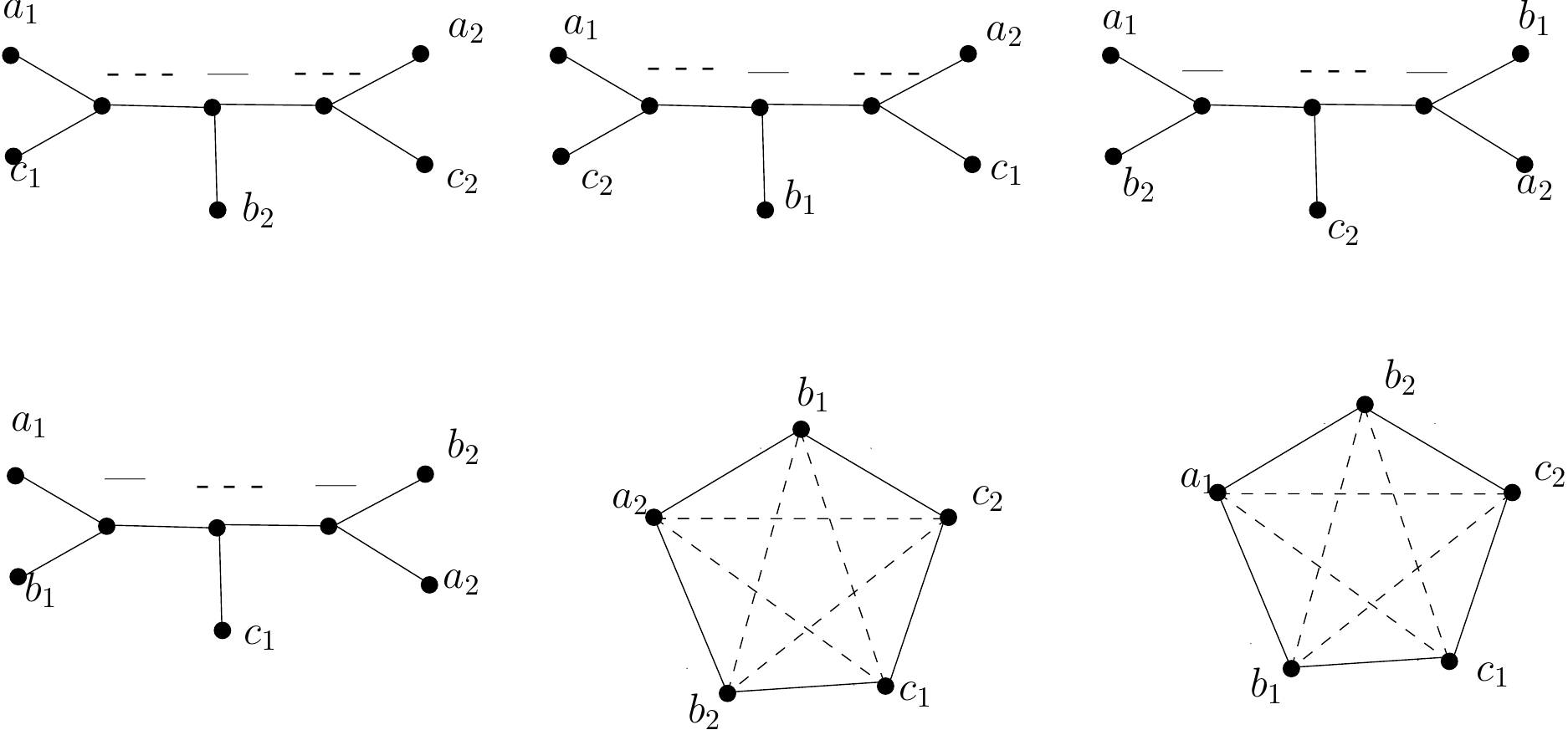} 
\caption{The 5-taxa trees respectively graph representations generated by a ternary map satisfying Conditions (1), (2), (3), (*) but not (4).} 
 \label{fig:new} 
\end{figure}

Another direction to follow up this work would be to consider more general graphs than  
trees. A {\em median graph} is a graph for which every three vertices have a unique median. 
Given a vertex-colored median graph and a subset $X$ of its vertex set, we can get 
a symmetric ternary map on $X \times X \times X$ by associating the color of the median 
to every 3-subset of $X$. It would be interesting to see whether this map can be used 
to reconstruct the underlying graph for other classes of median graphs than 
phylogenetic trees. In phylogenetics, median graphs are used to represent non-treelike 
data. Since the interior vertices of those so-called  {\em splits graphs} 
do in general not correspond to 
any ancestor of some of the taxa, reconstructing a collection of splits from the ternary 
map induced by a vertex-colored splits graph is probably limited to split systems 
that are almost compatible with a tree. 

It is one of the main observations of ~\cite{Hellmuth2013} that the 4-point condition 
for symbolic ultrametrics can be formulated in terms of {\em cographs} which 
are graphs that do 
not contain an induced path of length 3. For the special case that 
$\delta: X\times X\to M^{\odot}$ with $|M|=2$ and $m \in M$, consider the graph with 
vertex set $X$ where two vertices $x,y$ are adjacent, if and only if $\delta(x,y)=m$. 
Then deciding whether $\delta$ is a symbolic 
ultrametric can be reduced to checking whether this graph (as well as its complement) 
is a cograph. This is useful for analyzing real data which will usually not provide a perfect 
symbolic ultrametric, thus some approximation is required. For ternary maps and 
unrooted trees, the 5-taxa case looks promising, as Condition (4) translates to 
a forbidden graph representation that splits the edges of a $K_5$ into two 
5-cycles, thus we have a self-complementary forbidden induced subgraph. 
However, for more taxa, the 3-sets that are mapped to the same value of a ternary map 
$\delta$ define a 3-uniform hypergraph on $X$ and formulating Condition (4) in 
terms of this hypergraph does not seem to be promising. In addition, even if there 
are only two values of $\delta$ for 3-sets, Condition (3) does not become obsolete. 
We leave it as an open question, whether an alternative characterization of 
symbolic ternary metrics exists that makes it easier to solve the corresponding
approximation problem. 




\section{Note added in proof}

It was brought to our attention during the refereeing process that, in the context of game theory
and using different notation, Vladimir Gurvich published a result equivalent to Theorem \ref{thm:ultra-ternary}.
Already in 1984, the work was published in Russian \cite{Gurvich1984}, as well as an English
translation~\cite{Gurvich1984b}. More recently, Gurvich published another article on the topic \cite{Gurvich2009},
providing some details of the proofs that were previously omitted, and the result is included in a survey on graph
entropy by Simonyi \cite{Simonyi2001}. We point out that Gurvich’s result does not only imply our theorem, but also
Theorem \ref{thm:rooted} by B\"ocker and Dress \cite{Bocker1998} and the interpretation of symbolic ultrametrics in terms of cographs by
Hellmuth et al \cite{Hellmuth2013}.

In addition, Huber et al. \cite{Huber2017} independently published a preprint that contains the characterization of symbolic ternary
metrics. Their work, as well as Gurvich’s papers, reduces the problem to the rooted equivalent and then applies Theorem \ref{thm:rooted}.
Therefore, our quartet-based proof is the only one that stays within an unrooted setting, and the constraints on the quartet systems that
it provides may be useful for future applications.
\section*{Acknowledgements}

We thank the anonymous referees for their helpful comments and suggestions.
We thank Peter F. Stadler for suggesting to consider general median graphs
and Zeying Xu for some useful comments.
This work is supported by the NSFC (11671258) and STCSM (17690740800).
YL acknowledges support of Postdoctoral Science Foundation of China (No. 2016M601576).

\bibliographystyle{plain}
\bibliography{triple}

\end{document}